\newtheorem{theorem}{Theorem}[section]
\newtheorem{proposition}[theorem]{Proposition}
\newtheorem{corollary}[theorem]{Corollary}
\numberwithin{equation}{section}
\begin{document}

\title{Complemented subspaces of homogeneous polynomials}
\author{{{Sergio A. Pérez}{\thanks{S.Pérez was supported by CAPES and CNPq, Brazil. (corresponding author) }}}\\{\small  IMECC, UNICAMP} \\{\small {Rua S\'{e}rgio Buarque de Holanda, 651, CEP 13083-859, Campinas-SP,
Brazil.}}\\{\small \texttt{Email:Sergio.2060@hotmail.com}}\\\vspace{-0.2cm}}
\date{}
\maketitle

\begin{abstract}

Let $\mathcal{P}_{K} (^{n}E; F)$ (resp. $\mathcal{P}_{w} (^{n}E; F)$) denote the subspace of all $P\in \mathcal{P}(^{n}E; F)$ which are compact (resp. weakly continuous on bounded sets). We show that if $\mathcal{P}_{K} (^{n}E; F)$ contains an isomorphic copy of $c_{0}$, then $\mathcal{P}_{K} (^{n}E; F)$ is not
complemented in $\mathcal{P}(^{n}E; F)$. Likewise we show that if $\mathcal{P}_{w} (^{n}E; F)$ contains an isomorphic copy of $c_{0}$, then $\mathcal{P}_{w}(^{n}E; F)$ is not complemented in $\mathcal{P}(^{n}E; F)$.

{\small \medskip\noindent\textbf{Keywords:} Banach space, linear operator, compact operator, homogeneous polynomial, complemented subspace, unconditional basis.}

\end{abstract}

\renewcommand{\baselinestretch}{1.1}

\section{Introduction}

The problem of establishing sufficient conditions for the complementation of the subspace of compact linear operators $\mathcal{L}_{K}(E;F)$ in the space $\mathcal{L}(E;F)$ of all continuous linear operators, has been widely studied by many authors. For example, see Kalton \cite{KALTON}, Emmanuelle \cite{EM}, John \cite{KA}, Bator and Lewis \cite{LEW} and Ghenciu \cite{IOANA}, among others.

Emmanuele \cite{EM} and John \cite{KA} showed that if $c_{0}$ embeds in $\mathcal{L}_{K}(E;F)$ then $\mathcal{L}_{K}(E;F)$ is not complemented in $\mathcal{L}(E;F)$ for every $E$ and $F$ infinite dimensional Banach spaces.

John \cite{KA} proved that if $E$ and $F$ are arbitrary Banach spaces and $T: E\rightarrow F$ is a non compact
operator which admits a factorization $T = A\circ B$ through a Banach space
$G$ with an unconditional basis, then the subspace $\mathcal{L}_{K}(E;F)$
of compact operators contains an isomorphic copy of $c_{0}$ and thus $\mathcal{L}_{K}(E;F)$ is not
complemented in $\mathcal{L}(E;F)$.
John \cite{KA} also proved that if $E$ and $F$ are infinite dimensional Banach spaces, such that each
non compact operator $T \in \mathcal{L}(E;F)$ factors through a Banach space $G$ with an unconditional basis, then the following conditions are equivalent:
\begin{enumerate}
\item $\mathcal{L}_{K} (E; F)=\mathcal{L} (E;F)$.
\item $\mathcal{L} (E; F)$ contains no copy of $\ell_{\infty}$.
\item $\mathcal{L}_{K} (E; F)$ contains no copy of $c_{0}$.
\item  $\mathcal{L}_{K} (E; F)$ is complemented in $\mathcal{L} (E; F)$.
\end{enumerate}

Ghenciu \cite{IOANA} obtained the following result:
Let $E$ and $F$ be Banach spaces, and let $G$ be a Banach space with an unconditional basis $(g_{n})$ and coordinate functionals $(g^{\prime}_{n})$.

\begin{enumerate}

\item [(a)] If there exist operators $R\in\mathcal{L}(G; F)$ and $S\in\mathcal{L}(E; G)$ such that $(R(g_{n}))$ is a seminormalized basic sequence
in $F$ and $(S^{\prime}(g^{\prime}_{n}))$ is not relatively compact in $E^{\prime}$, then $\mathcal{L}_{K}(E; F)$ is not complemented in
$\mathcal{L}(E; F)$.

\item [(b)] If there exist operators $R\in\mathcal{L}(G; F)$ and $S\in\mathcal{L}(E; G)$ such that $(R(g_{n}))$ is a seminormalized basic sequence
in $F$ and $(S^{\prime}(g^{\prime}_{n}))$ is not relatively weakly compact in $E^{\prime}$, then $\mathcal{L}_{wK}(E; F)$ is not complemented in
$\mathcal{L}(E; F)$.
\end{enumerate}
This result generalizes results of several authors \cite{EMA},\cite{LEW}, \cite{FEDER}.
In this paper, we obtain polynomial versions of the preceding results.

This paper is based on part of the author's doctoral thesis at the Universidade Estadual de Campinas. This research has been supported by CAPES and CNPq.
The author is grateful to his thesis advisor, Professor Jorge Mujica, for his advice and help.

\section{Preliminaries}

Let $E$ and $F$ denote Banach spaces over $ \mathbb{K}$, where $ \mathbb{K}$ is $ \mathbb{R}$ or  $\mathbb{C}$. Let $E^{\prime}$
denote the dual of $E$. Denote by $ \mathcal{L}(E;F)$, $\mathcal{L}_{K}(E;F)$ and $\mathcal{L}_{wK}(E;F)$, respectively, the spaces of all bounded, all compact and all weakly compact linear operators of $E$ into $F$. Let $\mathcal{P}(^{n}E; F)$ denote the Banach space of all continuous $n$-homogeneous polynomials from $E$ into $F$. We omit $F$ when $F = \mathbb{K}$.
Let $\mathcal{P}_{w}(^{n}E; F)$ denote the subspace of all $P\in\mathcal{P}(^{n}E; F)$
which are weakly continuous on bounded sets, that is
the restriction $P|_{B}: B \rightarrow F $ is continuous for each bounded set $B\subset E$,
when $B $ and $F$ are endowed with the weak topology and the norm topology,
respectively. Let $\mathcal{P}_{K}(^{n}E; F)$ denote the
subspace of all $P\in \mathcal{P}(^{n}E; F)$  which map bounded sets onto relatively compact sets.
Let $\mathcal{P}_{wK}(^{n}E; F)$ denote the
subspace of all $P\in \mathcal{P}(^{n}E; F)$  which map bounded sets onto relatively weakly compact sets.
We always have the inclusions
$$ P_{w}(^{n}E; F)\subset \mathcal{P}_{K}(^{n}E; F)\subset\mathcal{P}_{wK}(^{n}E; F)\subset \mathcal{P}(^{n}E; F).$$
We refer to  \cite{SEAN} or \cite{LMUJICA} for background information on the theory of polynomials on Banach spaces.

$E$ is isomorphic to a complemented subspace of $F$ if and only if there are $A \in
\mathcal{L}(E; F)$ and $B\in \mathcal{L}(F;E)$ such that $B\circ A = I$. $E$ is said to have an unconditional finite dimensional expansion of the identity if there is a sequence of bounded linear operators $A_{n}:E\rightarrow E$ of finite rank, such that for $x\in E$
$$\sum_{n=1}^{\infty}A_{n}(x)=x$$
unconditionally.

We will say that the series $\displaystyle\sum_{n=1}^{\infty}x_{n}$ of elements of $X$ is weakly unconditionally Cauchy if $\displaystyle\sum_{n=1}^{\infty}|x^{\prime}(x_{n})|<\infty$ for all $x^{\prime}\in X^{\prime}$ or, equivalently if
$$\sup\bigg\{\bigg\|\sum_{n\in F}x_{n}\bigg\| ; F\subset \mathbb{N}, F finite\bigg\}<\infty.$$

A sequence $(x_{n})\subset E$ is a semi-normalized basic sequence if $(x_{n})$ is a Schauder basis for the closed subspace $M =
\overline{[x_{n} : n\in \mathbb{N}]}$, and moreover there are constant $a$ and $b$ such that $0< a<\|x_{n}\|<b$ for all $n\in\mathbb{N}$.
We denote by $(e_{n})$ the canonical basis of $c_{0}$. If $\Sigma$ is an algebra of subsets of a set $\Omega$, then a finitely additive vector
measure $\mu:\Sigma\rightarrow E$ is said to be strongly additive if the series $\displaystyle\sum_{n=1}^{\infty}\mu(A_{n})$ converges in norm for each
sequence $(A_{n})$ of pairwise disjoint members of $\Sigma$.
The Diestel-Faires theorem (see \cite[p.20, Theorem 2]{DIESTEL}) asserts that if $\Sigma$ is a $\sigma-$ algebra and $\mu:\Sigma\rightarrow E$
is not strongly additive, then $E$ contains an isomorphic copy of $\ell_{\infty}$.

\section{The main results}

The proof of our main results rests mainly on the following theorem of Ghenciu \cite{IOANA}, which generalizes results of several authors
\cite{EMA},\cite{LEW}, \cite{FEDER}.
\bigskip


\begin{theorem} \label{thm:(Teorema 10)}(\cite[Theorem 1]{IOANA})
Let $E$ and $F$ be Banach spaces, and let $G$ be a Banach space with an unconditional basis $(g_{n})$ and coordinate functionals $(g^{\prime}_{n})$.

\begin{enumerate}

\item [(a)] If there exist operators $R\in\mathcal{L}(G; F)$ and $S\in\mathcal{L}(E; G)$ such that $(R(g_{n}))$ is a seminormalized basic sequence
in $F$ and $(S^{\prime}(g^{\prime}_{n}))$ is not relatively compact in $E^{\prime}$, then $\mathcal{L}_{K}(E; F)$ is not complemented in
$\mathcal{L}(E; F)$.

\item [(b)] If there exist operators $R\in\mathcal{L}(G; F)$ and $S\in\mathcal{L}(E; G)$ such that $(R(g_{n}))$ is a seminormalized basic sequence
in $F$ and $(S^{\prime}(g^{\prime}_{n}))$ is not relatively weakly compact in $E^{\prime}$, then $\mathcal{L}_{wK}(E; F)$ is not complemented in
$\mathcal{L}(E; F)$.

\end{enumerate}
\end{theorem}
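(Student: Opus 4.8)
The plan is to argue by contradiction and to reduce both parts to the impossibility of a non-strongly-additive measure with values in $c_{0}$, so that the Diestel--Faires theorem recalled above supplies the contradiction. I will treat part (a) in detail, since part (b) runs along the same lines with weak compactness replacing compactness. The guiding idea is that the factorization data $R,S$ through the space $G$ with unconditional basis let me transport the canonical $\ell_{\infty}$-valued set function $A\mapsto\chi_{A}$ into the operator space, and that composing with a hypothetical projection onto the compact operators is precisely what turns this $\ell_{\infty}$-valued object into a $c_{0}$-valued one.

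First I would manufacture the relevant operator from $\ell_{\infty}$ into $\mathcal{L}(E;F)$. For $a=(a_{n})\in\ell_{\infty}$ let $D_{a}\in\mathcal{L}(G;G)$ be the diagonal operator $D_{a}(y)=\sum_{n}a_{n}g^{\prime}_{n}(y)g_{n}$, bounded by the unconditionality of $(g_{n})$, and set $\Psi(a)=R\circ D_{a}\circ S$. Then $\Psi\colon\ell_{\infty}\to\mathcal{L}(E;F)$ is bounded and linear, $\Psi(e_{n})=T_{n}$ where $T_{n}(x)=\langle S^{\prime}(g^{\prime}_{n}),x\rangle\,R(g_{n})$ is a rank one (hence compact) operator, and $\Psi$ maps $c_{0}$ into $\mathcal{L}_{K}(E;F)$ because a diagonal operator with coefficients tending to $0$ is compact. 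Next I would build a bounded biorthogonal system on $\mathcal{L}(E;F)$ adapted to $(T_{n})$. Let $(\phi_{n})\subset F^{\prime}$ be extensions of the coordinate functionals of the seminormalized basic sequence $(R(g_{n}))$, so that $\langle\phi_{m},R(g_{n})\rangle=\delta_{mn}$ and $\sup_{n}\|\phi_{n}\|<\infty$. Since $(S^{\prime}(g^{\prime}_{n}))$ is not relatively compact, after passing to a subsequence it is $\delta$-separated for some $\delta>0$, whence $\|S^{\prime}(g^{\prime}_{n})\|\geq\delta/2$ for all but one index; this lets me choose $x_{n}\in E$ with $\langle S^{\prime}(g^{\prime}_{n}),x_{n}\rangle=1$ and $\sup_{n}\|x_{n}\|<\infty$. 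Setting $\lambda_{n}(U)=\langle\phi_{n},U(x_{n})\rangle$ yields $\lambda_{n}\in\mathcal{L}(E;F)^{\prime}$ with $\sup_{n}\|\lambda_{n}\|<\infty$ and, crucially, $\lambda_{n}(T_{m})=\delta_{nm}$; since $\Psi(a)$ has range inside $[R(g_{n})]$ one also gets the identity $\lambda_{n}(\Psi(a))=a_{n}$ for all $a\in\ell_{\infty}$, which shows that without a projection the transported measure is merely $A\mapsto\chi_{A}$.

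Now suppose, for contradiction, that there is a bounded projection $P\colon\mathcal{L}(E;F)\to\mathcal{L}_{K}(E;F)$, and define $\mu\colon 2^{\mathbb{N}}\to\ell_{\infty}$ by $\mu(A)=\bigl(\lambda_{n}(P\Psi(\chi_{A}))\bigr)_{n}$, a bounded finitely additive measure. Because $P$ fixes the compact operator $T_{m}$, the singletons give $\mu(\{m\})=(\lambda_{n}(T_{m}))_{n}=e_{m}$. The point of inserting $P$ is that $K:=P\Psi(\chi_{A})$ is compact, and I claim this forces $\mu(A)\in c_{0}$: writing $\lambda_{n}(K)=\langle K^{\prime}\phi_{n},x_{n}\rangle$, the adjoint $K^{\prime}$ is compact, so $(K^{\prime}\phi_{n})$ is relatively norm compact, and combined with the weak$^{*}$-nullity of $(\phi_{n})$ this gives $\|K^{\prime}\phi_{n}\|\to0$, whence $\lambda_{n}(K)\to0$ because $(x_{n})$ is bounded. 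Thus $\mu$ takes values in $c_{0}$ while $\mu(\{m\})=e_{m}\not\to0$, so $\mu$ is not strongly additive. The Diestel--Faires theorem then forces $c_{0}$ to contain an isomorphic copy of $\ell_{\infty}$, which is absurd; this contradiction proves (a).

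The step I expect to be the real obstacle is the claim $\mu(A)\in c_{0}$, that is, the decay $\lambda_{n}(P\Psi(\chi_{A}))\to0$: this is exactly where compactness of the range of $P$ must be converted into norm decay, and it rests on choosing the coordinate functionals $(\phi_{n})$ weak$^{*}$-null in $F^{\prime}$ (so as to control the pairing against the cluster points of the compact sets $K(\{x_{n}\})$). Securing such a choice while simultaneously keeping $(x_{n})$ bounded with $\langle S^{\prime}(g^{\prime}_{n}),x_{n}\rangle=1$ is where the non-compactness hypothesis on $(S^{\prime}(g^{\prime}_{n}))$ is genuinely used, and it may require passing to a further subsequence by a Bessaga--Pe\l{}czy\'nski type selection. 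For part (b) the same scheme applies with $P$ a projection onto $\mathcal{L}_{wK}(E;F)$ and $K=P\Psi(\chi_{A})$ only weakly compact; the corresponding decay of $\lambda_{n}(K)$ is the delicate point, and I would obtain it by factoring the weakly compact $K$ through a reflexive space (Davis--Figiel--Johnson--Pe\l{}czy\'nski) and reducing the pairing to the reflexive situation, with the non-relatively-weakly-compact hypothesis on $(S^{\prime}(g^{\prime}_{n}))$ playing the role that non-compactness played in (a). The conclusion is again delivered by Diestel--Faires applied to the $c_{0}$-valued measure.
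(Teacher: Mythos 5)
Two things need saying. First, this paper contains no proof of the statement at all: Theorem \ref{thm:(Teorema 10)} is quoted verbatim from Ghenciu \cite[Theorem 1]{IOANA} and is used as a black box, so your proposal can only be measured against the literature, not against an internal argument. Second, measured on its own merits, your proof has a genuine gap, and it sits exactly at the step you yourself flagged as ``the real obstacle'': the bounded biorthogonal extensions $(\phi_{n})\subset F'$ of the coordinate functionals of $(R(g_{n}))$ cannot, in general, be chosen weak$^{*}$-null, and without weak$^{*}$-nullity your key claim $\mu(A)\in c_{0}$ collapses. Concretely, take $E=G=\ell_{2}$ with $g_{n}=e_{n}$, $S=\mathrm{id}$, $F=\ell_{\infty}$, and $R\colon\ell_{2}\hookrightarrow\ell_{\infty}$ the natural inclusion. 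Then $(R(g_{n}))=(e_{n})$ is a seminormalized basic sequence in $\ell_{\infty}$ and $(S'(g'_{n}))=(e_{n})$ is not relatively compact in $\ell_{2}$, so this instance falls squarely under part (a); yet if a bounded sequence $(\phi_{n})\subset(\ell_{\infty})'$ with $\phi_{n}(e_{m})=\delta_{nm}$ were weak$^{*}$-null, then $\phi_{n}(\chi_{A})\to 0$ for every $A\subset\mathbb{N}$, and Phillips' lemma would force $\sum_{k}|\phi_{n}(e_{k})|\to 0$, contradicting $\phi_{n}(e_{n})=1$. Passing to a subsequence does not help (the same argument applies to any subsequence), so no Bessaga--Pe\l{}czy\'nski selection can rescue the construction. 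Weak$^{*}$-nullity is automatic only on the closed span $[R(g_{n})]$, but the vectors $P\Psi(\chi_{A})x_{n}$ have no reason to lie near that span; and since the functionals $\lambda_{n}$ must be fixed before the uncountably many sets $A$ enter, you cannot tailor the extensions to the (uncountably many, unknown) limit points either. Part (b) has a second, independent gap: even granting weak$^{*}$-null $(\phi_{n})$, for merely weakly compact $K$ Gantmacher's theorem gives only that $K'\phi_{n}\to 0$ \emph{weakly} in $E'$, and a weakly null sequence paired against the varying bounded sequence $(x_{n})$ need not tend to zero; the Davis--Figiel--Johnson--Pe\l{}czy\'nski factorization $K=J\circ W$ does not repair this, since nothing controls $(J'\phi_{n})$.

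Note also that the obvious weakening of your scheme --- applying Diestel--Faires to the operator-valued measure $A\mapsto P\Psi(\chi_{A})$, which is legitimately not strongly additive because $\|P\Psi(e_{n})\|=\|T_{n}\|$ is bounded below --- yields only that $\ell_{\infty}$ embeds into $\mathcal{L}_{K}(E;F)$, and that is not by itself absurd: in the example above $\ell_{\infty}$ embeds isometrically into $\mathcal{L}_{K}(\ell_{2};\ell_{\infty})$ via the rank-one operators $x\mapsto\langle f,x\rangle a$, $a\in\ell_{\infty}$. So some further idea is unavoidable. A repair compatible with your structure is the one this paper itself uses for its polynomial analogue (Proposition \ref{cor 36}): first dispose of the case where the difficulty lives (there, ``$F$ contains a copy of $c_{0}$'') by a separate argument, so that in the remaining case the target subspace provably contains no copy of $\ell_{\infty}$ (by a result such as \cite[Theorem 3]{GONZALEZ M} in the polynomial setting), and only then apply Diestel--Faires to the operator-valued measure, with no scalar coordinate functionals at all. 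Alternatively one must reproduce the finer argument of Ghenciu's paper itself; the point is that your present reduction to a $c_{0}$-valued measure is not available in the generality claimed.
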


Emmanuele  \cite{EM} and John \cite{KA} independently proved that if $\mathcal{L}_{K}(E; F)$ contains a copy of $c_{0}$, then $\mathcal{L}_{K}(E; F)$ is not complemented in $\mathcal{L}(E; F)$ (see \cite[Theorem 2]{EM} and \cite[Theorem 1]{KA}). They also proved that if there exists a noncompact operator
$T\in \mathcal{L}(E; F)$ which factors through a Banach space with an unconditional basis, then $\mathcal{L}_{K}(E; F)$ contains a copy of $c_{0}$.
Clearly Theorem \ref{thm:(Teorema 10)} $(a)$ follows from these results.

\begin{theorem} \label{thm:(Teorema 11)}
Let $E$ and $F$ be Banach spaces, and let $G$ be a Banach space with an unconditional basis $(g_{n})$ and coordinate functionals $(g^{\prime}_{n})$.

\begin{enumerate}

\item [(a)] If there exist operators $R\in\mathcal{L}(G; F)$ and $S\in\mathcal{L}(E; G)$ such that $(R(g_{n}))$ is a seminormalized basic sequence
in $F$ and $(S^{\prime}(g^{\prime}_{n}))$ is not relatively compact in $E^{\prime}$, then $\mathcal{P}_{K}(^{n}E; F)$ is not complemented in
$\mathcal{P}(^{n}E; F)$ for every $n\in\mathbb{N}$.

\item [(b)] If there exist operators $R\in\mathcal{L}(G; F)$ and $S\in\mathcal{L}(E; G)$ such that $(R(g_{n}))$ is a seminormalized basic sequence
in $F$ and $(S^{\prime}(g^{\prime}_{n}))$ is not relatively weakly compact in $E^{\prime}$, then $\mathcal{P}_{wK}(^{n}E; F)$ is not complemented in
$\mathcal{P}(^{n}E; F)$ for every $n\in\mathbb{N}$.

\end{enumerate}
\end{theorem}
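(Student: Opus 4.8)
The plan is to deduce the polynomial statements from the linear statements of Theorem~\ref{thm:(Teorema 10)} by transporting projections. Since the hypotheses of Theorem~\ref{thm:(Teorema 11)}(a) coincide with those of Theorem~\ref{thm:(Teorema 10)}(a), the latter already tells us that $\mathcal{L}_{K}(E;F)$ is not complemented in $\mathcal{L}(E;F)$. Hence it suffices to prove the contrapositive implication: if $\mathcal{P}_{K}(^{n}E;F)$ \emph{were} complemented in $\mathcal{P}(^{n}E;F)$, then $\mathcal{L}_{K}(E;F)$ would be complemented in $\mathcal{L}(E;F)$. Part (b) will follow the same pattern, with $\mathcal{P}_{wK}$, $\mathcal{L}_{wK}$ in place of $\mathcal{P}_{K}$, $\mathcal{L}_{K}$ and an appeal to Theorem~\ref{thm:(Teorema 10)}(b).

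To build the transport, fix $\varphi\in E^{\prime}$ and $x_{0}\in E$ with $\varphi(x_{0})=1$ (possible since $E\neq\{0\}$). I would introduce two bounded linear maps: a \emph{lifting} $\Phi:\mathcal{L}(E;F)\to\mathcal{P}(^{n}E;F)$ given by $\Phi(T)(x)=\varphi(x)^{n-1}T(x)$, and a \emph{restriction} $\Psi:\mathcal{P}(^{n}E;F)\to\mathcal{L}(E;F)$ given by $\Psi(P)(x)=n\,\check{P}(x,x_{0},\dots,x_{0})-(n-1)\,\varphi(x)\,P(x_{0})$, where $\check{P}$ denotes the symmetric $n$-linear mapping associated with $P$. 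Linearity is clear, and boundedness of $\Psi$ uses the polarization inequality $\|\check{P}\|\le\frac{n^{n}}{n!}\|P\|$. The whole argument then rests on three properties: (i) $\Psi\circ\Phi=I_{\mathcal{L}(E;F)}$; (ii) $\Phi$ carries $\mathcal{L}_{K}(E;F)$ into $\mathcal{P}_{K}(^{n}E;F)$; and (iii) $\Psi$ carries $\mathcal{P}_{K}(^{n}E;F)$ into $\mathcal{L}_{K}(E;F)$.

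For (i), the symmetric $n$-linear mapping of $Q:=\Phi(T)$ is $\check{Q}(x_{1},\dots,x_{n})=\frac{1}{n}\sum_{j}\big(\prod_{i\neq j}\varphi(x_{i})\big)T(x_{j})$, and a direct evaluation gives $\check{Q}(x,x_{0},\dots,x_{0})=\frac{1}{n}\big(T(x)+(n-1)\varphi(x)T(x_{0})\big)$ together with $Q(x_{0})=T(x_{0})$, so the correction term in $\Psi$ is exactly what cancels the rank-one defect and yields $\Psi(\Phi(T))=T$. For (ii), if $T(B)$ is relatively compact then $\Phi(T)(B)$ is contained in the relatively compact set $\{\lambda y:|\lambda|\le C,\ y\in\overline{T(B)}\}$ with $C=\sup_{x\in B}|\varphi(x)|^{n-1}$. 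The decisive point, and the step I expect to be the main obstacle, is (iii): the rank-one term $x\mapsto\varphi(x)P(x_{0})$ is harmless, but one must show that $x\mapsto\check{P}(x,x_{0},\dots,x_{0})$ is compact whenever $P$ is. Here I would invoke the polarization formula to write this operator as a finite linear combination of the maps $x\mapsto P(\varepsilon x+c\,x_{0})$ ($\varepsilon=\pm1$, $c$ a scalar); on a bounded set the arguments stay bounded, so its range is a finite combination of relatively compact sets, hence relatively compact.

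With (i)--(iii) established, suppose $\pi:\mathcal{P}(^{n}E;F)\to\mathcal{P}_{K}(^{n}E;F)$ is a bounded projection. Set $\rho:=\Psi\circ\pi\circ\Phi$. By (iii), $\rho$ maps $\mathcal{L}(E;F)$ into $\mathcal{L}_{K}(E;F)$; and for $T\in\mathcal{L}_{K}(E;F)$, property (ii) gives $\Phi(T)\in\mathcal{P}_{K}(^{n}E;F)$, so $\pi(\Phi(T))=\Phi(T)$ and, by (i), $\rho(T)=\Psi(\Phi(T))=T$. Thus $\rho$ is a bounded projection of $\mathcal{L}(E;F)$ onto $\mathcal{L}_{K}(E;F)$, contradicting Theorem~\ref{thm:(Teorema 10)}(a); this proves (a). For (b) the only change is that in (ii) and (iii) the relative weak compactness of the image sets is obtained from Krein's theorem (the closed absolutely convex hull of a weakly compact set is weakly compact), and the final contradiction is drawn from Theorem~\ref{thm:(Teorema 10)}(b).
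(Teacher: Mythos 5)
Your proof is correct, but it takes a genuinely different route from the paper's. The paper argues through Ryan's linearization theorem: $\mathcal{P}(^{n}E;F)\cong\mathcal{L}(\hat{\otimes}_{n,s,\pi}E;F)$ with $P$ compact (resp.\ weakly compact) if and only if $T_{P}$ is, so a projection onto $\mathcal{P}_{K}(^{n}E;F)$ becomes a projection $\pi$ onto $\mathcal{L}_{K}(\hat{\otimes}_{n,s,\pi}E;F)$; then Blasco's theorem provides $A\in\mathcal{L}(E;\hat{\otimes}_{n,s,\pi}E)$ and $B\in\mathcal{L}(\hat{\otimes}_{n,s,\pi}E;E)$ with $B\circ A=I$, and $T\mapsto\pi(T\circ B)\circ A$ is a projection of $\mathcal{L}(E;F)$ onto $\mathcal{L}_{K}(E;F)$, contradicting Theorem \ref{thm:(Teorema 10)}. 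You instead transport the projection directly between $\mathcal{L}(E;F)$ and $\mathcal{P}(^{n}E;F)$ using the hand-built operators $\Phi(T)(x)=\varphi(x)^{n-1}T(x)$ and $\Psi(P)(x)=n\check{P}(x,x_{0},\dots,x_{0})-(n-1)\varphi(x)P(x_{0})$; your computation of $\check{Q}$ for $Q=\Phi(T)$, the identity $\Psi\circ\Phi=I$, and the class-preservation claims (ii)--(iii) (polarization plus stability of relatively compact, resp.\ relatively weakly compact, sets under finite sums and bounded scalar dilations) are all verified correctly, so $\rho=\Psi\circ\pi\circ\Phi$ is indeed a bounded projection onto $\mathcal{L}_{K}(E;F)$, resp.\ $\mathcal{L}_{wK}(E;F)$, and the contradiction with Theorem \ref{thm:(Teorema 10)} goes through. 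What each approach buys: the paper's proof is shorter, delegating the work to two cited structural results (Ryan and Blasco), while yours is elementary and self-contained, using no tensor products at all. Your template also has extra reach: since $\Phi$ carries $\mathcal{L}_{K}(E;F)=\mathcal{P}_{w}(^{1}E;F)$ into $\mathcal{P}_{w}(^{n}E;F)$, and by the same polarization argument $\Psi$ carries $\mathcal{P}_{w}(^{n}E;F)$ into $\mathcal{L}_{w}(E;F)=\mathcal{L}_{K}(E;F)$, the identical argument proves Theorem \ref{thm:(Teorema 12)} as well --- a case the paper must treat separately, precisely because linearization fails for $\mathcal{P}_{w}$. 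Indeed, your $\Phi$ is the $(n-1)$-fold iterate of the Aron--Schottenloher multiplication operator $A(P)(x)=\varphi_{0}(x)P(x)$ on which the paper's inductive proof of Theorem \ref{thm:(Teorema 12)} relies, and your $\Psi$ makes the left inverse explicit rather than citing it.
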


\begin{proof}

$(a)$ The case $n=1$ follows from Theorem \ref{thm:(Teorema 10)} $(a)$. If $n\in\mathbb{N}$, then by a result of Ryan \cite{RYAN} there exists an isomorphism
$$P\in\mathcal{P}(^{n}E; F)\rightarrow T_{P}\in\mathcal{L}(\hat{\otimes}_{n,s,\pi}E; F).$$
Furthermore $P\in \mathcal{P}_{K}(^{n}E; F)$ if and only if $T_{P}\in\mathcal{L}_{K}(\hat{\otimes}_{n,s,\pi}E; F)$.
Suppose that $\mathcal{P}_{K}(^{n}E; F)$ is complemented in $\mathcal{P}(^{n}E; F)$. Then $\mathcal{L}_{K}(\hat{\otimes}_{n,s,\pi}E; F)$ is complemented
in $\mathcal{L}(\hat{\otimes}_{n,s,\pi}E; F)$. Let $\pi:\mathcal{L}(\hat{\otimes}_{n,s,\pi}E; F)\rightarrow \mathcal{L}_{K}(\hat{\otimes}_{n,s,\pi}E; F)$
be a projection. By a result of Blasco \cite[Theorem 3]{BLASCO} $E$ is isomorphic to a complemented subspace of $\hat{\otimes}_{n,s,\pi}E$. Hence there exist operators $A\in\mathcal{L}(E;\hat{\otimes}_{n,s,\pi}E)$ and $B\in\mathcal{L}(\hat{\otimes}_{n,s,\pi}E; E)$ such that $B\circ A=I$. Consider the operator
$$\rho: T\in\mathcal{L}(E;F)\rightarrow \pi(T\circ B)\circ A\in \mathcal{L}_{K}(E; F).$$
If $T\in\mathcal{L}_{K}(E; F)$, then $T\circ B\in\mathcal{L}_{K}(\hat{\otimes}_{n,s,\pi}E; F)$ and therefore $\pi(T\circ B)\circ A=T\circ B\circ A=T$.
Thus $\rho: \mathcal{L}(E;F)\rightarrow \mathcal{L}_{K}(E; F)$ is a projection, contradicting the case $n=1$.

$(b)$ The proof of $(b)$ is almost identical to the proof of $(a)$, but using that $P\in \mathcal{P}_{wK}(^{n}E; F)$ if and only if $T_{P}\in \mathcal{L}_{wK}(\hat{\otimes}_{n,s,\pi}E; F)$, a result which is also due to Ryan \cite{RYAN}.
\end{proof}

\begin{theorem} \label{thm:(Teorema 12)}
Let $E$ and $F$ be Banach spaces, and let $G$ be a Banach space with an unconditional basis $(g_{n})$ and coordinate functionals $(g^{\prime}_{n})$.
If there exist operators $R\in \mathcal{L}(G; F)$ and $S\in\mathcal{L}(E; G)$ such that $(R(g_{n}))$ is a seminormalized basic sequence
in $F$ and $(S^{\prime}(g^{\prime}_{n}))$ is not relatively compact in $E^{\prime}$, then $\mathcal{P}_{w}(^{n}E; F)$ is not complemented in
$\mathcal{P}(^{n}E; F)$ for every $n\in \mathbb{N}$.
\end{theorem}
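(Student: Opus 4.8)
The plan is to reduce Theorem~\ref{thm:(Teorema 12)} to the linear case, Theorem~\ref{thm:(Teorema 10)}$(a)$, in the spirit of the proof of Theorem~\ref{thm:(Teorema 11)}, but \emph{without} Ryan's linearization: weak continuity on bounded sets of $P$ is not detected by compactness of $T_{P}$ on $\hat{\otimes}_{n,s,\pi}E$, since that route would identify $\mathcal{P}_{w}$ with $\mathcal{P}_{K}$, and these differ already for scalar polynomials on $\ell_{2}$ (there $\mathcal{P}_{K}(^{n}\ell_{2})=\mathcal{P}(^{n}\ell_{2})$, while $\mathcal{P}_{w}(^{n}\ell_{2})\neq\mathcal{P}(^{n}\ell_{2})$). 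Instead I would build, by hand, a pair of bounded operators linking $\mathcal{L}(E;F)$ and $\mathcal{P}(^{n}E;F)$ that respects the two subspaces. Fix $\phi\in E'$ with $\|\phi\|=1$ and $x_{0}\in E$ with $\phi(x_{0})=1$, and set
\[
J:\mathcal{L}(E;F)\to\mathcal{P}(^{n}E;F),\qquad (JU)(x)=\phi(x)^{n-1}Ux .
\]

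For the recovery map I would use the Fréchet derivative. Writing $\check{P}$ for the symmetric $n$-linear mapping associated with $P$ and $P'(x_{0})w=n\check{P}(x_{0},\dots,x_{0},w)$ for the derivative at $x_{0}$, define
\[
L:\mathcal{P}(^{n}E;F)\to\mathcal{L}(E;F),\qquad (LP)(x)=P'(x_{0})\bigl(x-\phi(x)x_{0}\bigr)+\phi(x)P(x_{0}).
\]
Both $J$ and $L$ are bounded (for $L$ one uses the polarization estimate $\|\check{P}\|\le\tfrac{n^{n}}{n!}\|P\|$). A direct polarization computation gives $\check{(JU)}(x_{0},\dots,x_{0},w)=\tfrac{1}{n}Uw+\tfrac{n-1}{n}\phi(w)Ux_{0}$, whence $P'(x_{0})w=Uw+(n-1)\phi(w)Ux_{0}$ for $P=JU$; evaluating at $w=x-\phi(x)x_{0}\in\ker\phi$ and adding $\phi(x)P(x_{0})=\phi(x)Ux_{0}$ yields $LJU=U$, so $L\circ J=\mathrm{id}_{\mathcal{L}(E;F)}$.

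The two containments that make the reduction work are $J(\mathcal{L}_{K}(E;F))\subseteq\mathcal{P}_{w}(^{n}E;F)$ and $L(\mathcal{P}_{w}(^{n}E;F))\subseteq\mathcal{L}_{K}(E;F)$. The first is easy: if $U$ is compact it is weak-to-norm continuous on bounded sets, hence so is $x\mapsto\phi(x)^{n-1}Ux$. Granting both, suppose $\mathcal{P}_{w}(^{n}E;F)$ is complemented in $\mathcal{P}(^{n}E;F)$ by a projection $Q$; then $\rho:=L\circ Q\circ J$ maps $\mathcal{L}(E;F)$ into $L(\mathcal{P}_{w})\subseteq\mathcal{L}_{K}(E;F)$, and for $U\in\mathcal{L}_{K}(E;F)$ one has $JU\in\mathcal{P}_{w}$, so $QJU=JU$ and $\rho U=LJU=U$. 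Thus $\rho$ is a projection of $\mathcal{L}(E;F)$ onto $\mathcal{L}_{K}(E;F)$, contradicting Theorem~\ref{thm:(Teorema 10)}$(a)$. For $n=1$ the operators $J,L$ are the identity and the statement is literally Theorem~\ref{thm:(Teorema 10)}$(a)$.

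The main obstacle is the second containment, i.e.\ that the derivative $P'(x_{0})$ of a polynomial weakly continuous on bounded sets is a \emph{compact} operator; granting this, $LP=P'(x_{0})\circ(\mathrm{id}-x_{0}\otimes\phi)+(\phi\otimes P(x_{0}))$ is a sum of a compact and a finite-rank operator, hence compact. I would establish the needed fact in two steps: (i) for linear maps, weak-to-norm continuity on bounded sets is equivalent to compactness, that is $\mathcal{P}_{w}(^{1}E;F)=\mathcal{L}_{K}(E;F)$ --- the nontrivial direction holds because such a $T$ satisfies $\|T|_{M}\|\le\varepsilon$ on a finite-codimensional subspace $M$ for every $\varepsilon>0$, which forces $T(B_{E})$ to be totally bounded; and (ii) if $P\in\mathcal{P}_{w}(^{n}E;F)$ then each partial map $w\mapsto\check{P}(x_{0},\dots,x_{0},w)$ is weak-to-norm continuous on bounded sets, which one reads off from the polarization formula together with the weak continuity of $P$, and is therefore compact by (i). This is where essentially all the content lies; the remaining verifications ($LJ=\mathrm{id}$, boundedness of $J,L$, and the first containment) are the routine bookkeeping indicated above.
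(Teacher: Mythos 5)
Your proof is correct, but it takes a genuinely different route from the paper's. The paper argues by induction on $n$: it invokes Aron--Schottenloher's result that $\mathcal{P}(^{n}E;F)$ is isomorphic to a complemented subspace of $\mathcal{P}(^{n+1}E;F)$, via $A(P)=\varphi_{0}\cdot P$ and a left inverse $B$ with $B\circ A=I$, observes that $A$ maps $\mathcal{P}_{w}(^{n}E;F)$ into $\mathcal{P}_{w}(^{n+1}E;F)$, and thus transports a hypothetical projection $\pi$ onto $\mathcal{P}_{w}(^{n+1}E;F)$ down to the projection $\rho=B\circ\pi\circ A$ onto $\mathcal{P}_{w}(^{n}E;F)$; the base case $n=1$ is Theorem~\ref{thm:(Teorema 10)}$(a)$, using $\mathcal{P}_{w}(^{1}E;F)=\mathcal{L}_{K}(E;F)$ exactly as you do. You instead collapse the induction into a single reduction from degree $n$ to degree $1$: your $J$ is the same degree-raising device (multiplication by $\phi^{n-1}$), but your left inverse $L$ is built by hand from the derivative at $x_{0}$ plus a rank-one correction, and the substance of your argument is the verification that $L$ maps $\mathcal{P}_{w}(^{n}E;F)$ into $\mathcal{L}_{K}(E;F)$, i.e.\ that $P'(x_{0})$ is compact whenever $P$ is weakly continuous on bounded sets (polarization plus the case $n=1$). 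That step is sound, and it is worth noting that the paper needs, but does not verify, the exactly analogous fact: for $\rho=B\circ\pi\circ A$ to land in $\mathcal{P}_{w}(^{n}E;F)$ one must know that $B$ maps $\mathcal{P}_{w}(^{n+1}E;F)$ into $\mathcal{P}_{w}(^{n}E;F)$, which requires the same kind of partial-evaluation argument you carry out for $L$. So the paper buys brevity by citing a known complementation theorem at the price of leaving this preservation property implicit, while your proof is longer and more computational but self-contained and makes the key compactness step explicit. Your verifications ($L\circ J=I$, boundedness of $J$ and $L$, $J(\mathcal{L}_{K}(E;F))\subset\mathcal{P}_{w}(^{n}E;F)$) check out, and your reason for abandoning Ryan's linearization is precisely the one the paper gives.
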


\begin{proof}
The method of proof of Theorem \ref{thm:(Teorema 11)} does not work here, since it is not true in general that $P\in\mathcal{P}_{w}(^{n}E; F)$ if and only if
$T_{P}\in \mathcal{L}_{w}(\hat{\otimes}_{n,s,\pi}E; F)$. Thus we have to proceed differently.
It follows from results of Aron and Prolla \cite{ARON} and Aron, Hervés and Valdivia \cite{VALDIVIA} that $\mathcal{P}_{w}(^{n}E; F)\subset \mathcal{P}_{K}(^{n}E; F)$ for every $n\in \mathbb{N}$, and it is easy to see that $\mathcal{P}_{w}(^{n}E; F)=\mathcal{P}_{K}(^{n}E; F)$ when $n=1$.
Thus the case $n=1$ follows from Theorem \ref{thm:(Teorema 10)} $(a)$. To prove the theorem by induction on $n$ it suffices to prove that
if $\mathcal{P}_{w}(^{n+1}E; F)$ is complemented in $\mathcal{P}(^{n+1}E; F)$, then $\mathcal{P}_{w}(^{n}E; F)$ is complemented in
$\mathcal{P}(^{n}E; F)$. Aron and Schottenloher \cite[Proposition 5.3]{AR} proved that $\mathcal{P}(^{n}E; F)$ is isomorphic to a complemented subspace of
$\mathcal{P}(^{n+1}E; F)$ when $F$ is the scalar field, but their proof works equally well when $F$ is an arbitrary Banach space. Thus there
exist operators $A\in\mathcal{L}(\mathcal{P}(^{n}E; F); \mathcal{P}(^{n+1}E; F))$ and $B\in\mathcal{L}(\mathcal{P}(^{n+1}E; F);\mathcal{P}(^{n}E; F))$
such that $B\circ A=I$. The operator $A$ is of the form
$$A(P)(x)=\varphi_{0}(x)P(x)$$
for every $P\in\mathcal{P}(^{n}E; F)$ and $x\in E$, where $\varphi_{0}\in E^{\prime}$ verifies that $\|\varphi_{0}\|=1=\varphi_{0}(x_{0})$, where
$x_{0}\in E$ and $\|x_{0}\|=1$. It is clear that if $P\in \mathcal{P}_{w}(^{n}E; F)$, then $A(P)\in \mathcal{P}_{w}(^{n+1}E; F)$.
Let us assume that $\mathcal{P}_{w}(^{n+1}E; F)$ is complemented in $\mathcal{P}(^{n+1}E; F)$, and let $\pi: \mathcal{P}(^{n+1}E; F)\rightarrow \mathcal{P}_{w}(^{n+1}E; F)$ be a projection. Consider the operator
$$\rho=B\circ \pi\circ A: \mathcal{P}(^{n}E; F)\rightarrow \mathcal{P}_{w}(^{n}E; F).$$
If $P\in \mathcal{P}_{w}(^{n}E; F)$, then $A(P)\in \mathcal{P}_{w}(^{n+1}E; F)$, and therefore
$$\rho(P)=B\circ \pi\circ A(P)=B\circ A(P)=P.$$
Thus $\rho:\mathcal{P}(^{n}E; F)\rightarrow \mathcal{P}_{w}(^{n}E; F)$ is a projection, and therefore $\mathcal{P}_{w}(^{n}E; F)$ is complemented
in $\mathcal{P}(^{n}E; F)$. This completes the proof.
\end{proof}

Ghenciu \cite{IOANA} derived as corollaries of Theorem \ref{thm:(Teorema 10)} results of several authors \cite{EMA}, \cite{LEW}, \cite{FEDER}, \cite{KALTON} and \cite{KA}. We now apply Theorems \ref{thm:(Teorema 11)} and \ref{thm:(Teorema 12)} to obtain polynomials versions of those corollaries.

\begin{corollary}\label{thm:(Teorema 13)}
If $F$ contains a copy of $c_{0}$ and $E^{\prime}$ contains a weak-star null sequence which is not weakly null, then $\mathcal{P}_{wK}(^{n}E; F)$ is
not complemented in $\mathcal{P}(^{n}E; F)$ for every $n\in \mathbb{N}$.
\end{corollary}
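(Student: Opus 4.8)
The plan is to deduce this from Theorem \ref{thm:(Teorema 11)}(b) by choosing the auxiliary space $G$ to be $c_{0}$ equipped with its canonical unconditional basis $(e_{n})$, whose coordinate functionals are the canonical basis $(e'_{n})$ of $\ell_{1}=(c_{0})'$. First I would exploit the hypothesis that $F$ contains a copy of $c_{0}$: fix an isomorphic embedding $R\in\mathcal{L}(c_{0};F)$. Since $(e_{n})$ is a normalized basic sequence and $R$ is an isomorphism onto its range, the sequence $(R(e_{n}))$ is automatically a seminormalized basic sequence in $F$, which supplies the first of the two conditions required by Theorem \ref{thm:(Teorema 11)}(b).

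Next I would use the weak-star null sequence to build $S$. Let $(\varphi_{n})\subset E'$ be weak-star null but not weakly null; being weak-star convergent it is norm bounded by the uniform boundedness principle. Because $\varphi_{n}(x)\to 0$ for every $x\in E$, the assignment $S(x)=(\varphi_{n}(x))_{n}$ defines a bounded linear operator $S\in\mathcal{L}(E;c_{0})$. Computing the adjoint on the canonical basis of $\ell_{1}$ gives, for every $x\in E$, the identity $\langle S'(e'_{n}),x\rangle=\langle e'_{n},S(x)\rangle=\varphi_{n}(x)$, so that $S'(e'_{n})=\varphi_{n}$ for each $n$. Thus the sequence $(S'(g'_{n}))$ appearing in Theorem \ref{thm:(Teorema 11)}(b) is exactly $(\varphi_{n})$.

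It then remains to verify the second condition, namely that $(\varphi_{n})$ is not relatively weakly compact in $E'$; this is the step where the hypothesis ``not weakly null'' is essential, and I expect it to be the main point of the argument. I would argue by contradiction using the Eberlein--\v{S}mulian theorem: if $(\varphi_{n})$ were relatively weakly compact, then every subsequence would admit a further subsequence converging weakly to some $\psi\in E'$. Since weak convergence implies weak-star convergence and the full sequence is weak-star null, uniqueness of weak-star limits forces $\psi=0$. Hence every subsequence of $(\varphi_{n})$ has a weakly null further subsequence, and by the standard characterization of convergence through subsequences (valid in any topological space) this forces $(\varphi_{n})$ itself to be weakly null, contradicting the hypothesis. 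Therefore $(\varphi_{n})=(S'(e'_{n}))$ is not relatively weakly compact in $E'$.

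With $G=c_{0}$ and the operators $R$ and $S$ constructed above, both hypotheses of Theorem \ref{thm:(Teorema 11)}(b) hold, and applying that theorem yields that $\mathcal{P}_{wK}(^{n}E;F)$ is not complemented in $\mathcal{P}(^{n}E;F)$ for every $n\in\mathbb{N}$, as claimed.
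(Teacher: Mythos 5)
Your proof is correct and takes essentially the same route as the paper: the paper obtains this corollary by observing (as Ghenciu did for the linear case $n=1$) that the hypotheses imply those of Theorem \ref{thm:(Teorema 11)}(b), which is precisely what you do with $G=c_{0}$, $R$ the embedding of $c_{0}$ into $F$, and $S(x)=(\varphi_{n}(x))_{n}$. The only difference is that you spell out the verification (including the Eberlein--\v{S}mulian argument that a weak-star null, non-weakly-null sequence cannot be relatively weakly compact) that the paper delegates to \cite{IOANA}.
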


\begin{corollary}\label{thm:(Teorema 14)}
If $F$ contains a copy of $c_{0}$ and $E$ contains a complemented copy of $c_{0}$, then $\mathcal{P}_{wK}(^{n}E; F)$ is
not complemented in $\mathcal{P}(^{n}E; F)$ for every $n\in \mathbb{N}$.
\end{corollary}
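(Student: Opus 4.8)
The plan is to derive this as a special case of Theorem~\ref{thm:(Teorema 11)}$(b)$, whose hypotheses do not mention $G$ at all, by making the canonical choice $G=c_{0}$ with its unconditional basis $(g_{n})=(e_{n})$ and coordinate functionals $(g_{n}')=(e_{n}')$; recall that $c_{0}'=\ell_{1}$, so that $(e_{n}')$ is the unit vector basis of $\ell_{1}$. It then remains to manufacture operators $R\in\mathcal{L}(c_{0};F)$ and $S\in\mathcal{L}(E;c_{0})$ such that $(R(e_{n}))$ is a seminormalized basic sequence in $F$ while $(S'(e_{n}'))$ is not relatively weakly compact in $E'$; once both are in place the conclusion is immediate.

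For $R$ I would use the hypothesis that $F$ contains a copy of $c_{0}$, which furnishes an isomorphic embedding $R\colon c_{0}\to F$. Since an isomorphism carries a Schauder basis onto a Schauder basis of its range, $(R(e_{n}))$ is a basic sequence; it is seminormalized because $\|R^{-1}\|^{-1}\le\|R(e_{n})\|\le\|R\|$ for every $n$, where $R^{-1}$ denotes the inverse of $R$ on its range (here $\|e_{n}\|=1$). This disposes of the first hypothesis with no real difficulty.

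The substantive step is the construction of $S$. From the complemented copy of $c_{0}$ in $E$ I obtain an isomorphic embedding $U\in\mathcal{L}(c_{0};E)$ together with an operator $V\in\mathcal{L}(E;c_{0})$ satisfying $V\circ U=I_{c_{0}}$, and I set $S=V$. The crux is to verify that $(V'(e_{n}'))$ is not relatively weakly compact in $E'$. Dualizing the identity $V\circ U=I_{c_{0}}$ gives $U'\circ V'=I_{\ell_{1}}$, hence $U'(V'(e_{n}'))=e_{n}'$ for every $n$. Were $(V'(e_{n}'))$ relatively weakly compact in $E'$, then the bounded (hence weak-to-weak continuous) operator $U'$ would carry it to a relatively weakly compact subset of $\ell_{1}$ containing all the $e_{n}'$; but $\ell_{1}$ has the Schur property, so a relatively weakly compact set there is relatively norm compact, while $(e_{n}')$ has no norm-convergent subsequence since $\|e_{m}'-e_{n}'\|=2$ for $m\neq n$. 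This contradiction is exactly what the argument turns on, and I expect it to be the only genuinely delicate point.

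With both hypotheses of Theorem~\ref{thm:(Teorema 11)}$(b)$ verified for $G=c_{0}$, $R$, and $S=V$, that theorem yields at once that $\mathcal{P}_{wK}(^{n}E;F)$ is not complemented in $\mathcal{P}(^{n}E;F)$ for every $n\in\mathbb{N}$. I note in passing that the same computation shows $(V'(e_{n}'))$ to be weak-star null but not weakly null in $E'$, since $\langle V'(e_{n}'),x\rangle=(Vx)_{n}\to 0$ for each $x\in E$ while $U'(V'(e_{n}'))=e_{n}'$ is not weakly null in $\ell_{1}$; thus the result could alternatively be read off from Corollary~\ref{thm:(Teorema 13)}.
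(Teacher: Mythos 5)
Your proof is correct and takes essentially the same approach as the paper: the paper obtains this corollary by observing (following Ghenciu) that the hypotheses imply those of Theorem \ref{thm:(Teorema 11)}$(b)$, which is exactly the verification you carry out explicitly with $G=c_{0}$, $R$ the embedding of $c_{0}$ into $F$, and $S=V$ coming from the complemented copy of $c_{0}$ in $E$. Your Schur-property argument that $(V'(e_{n}'))$ is not relatively weakly compact in $E'$ is precisely the substance of the verification that the paper cites from Ghenciu rather than reproducing.
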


\begin{corollary}\label{thm:(Teorema 15)}
If $F$ contains a copy of $\ell_{1}$ and $\mathcal{L}(E; \ell_{1})\neq \mathcal{L}_{K}(E; \ell_{1})$, then $\mathcal{P}_{wK}(^{n}E; F)$ is
not complemented in $\mathcal{P}(^{n}E; F)$ for every $n\in\mathbb{N}$.
\end{corollary}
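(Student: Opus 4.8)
The plan is to obtain the corollary from Theorem~\ref{thm:(Teorema 11)}(b): the copy of $\ell_{1}$ in $F$ is meant to produce the operator $R$, while the hypothesis $\mathcal{L}(E;\ell_{1})\neq\mathcal{L}_{K}(E;\ell_{1})$ is meant to produce the operator $S$, for a single Banach space $G$ with an unconditional basis. The decisive step is the construction of $S$, namely manufacturing inside $E^{\prime}$ a seminormalized sequence of the form $(S^{\prime}(g_{n}^{\prime}))$ that is \emph{not} relatively weakly compact; this is exactly what the hypothesis on $E$ must supply, and it is here that my earlier attempt stalled.

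First I would convert $\mathcal{L}(E;\ell_{1})\neq\mathcal{L}_{K}(E;\ell_{1})$ into internal structure of $E^{\prime}$. Under the canonical identification $\mathcal{L}(E;\ell_{1})\cong\mathcal{L}(c_{0};E^{\prime})$, an operator $S\colon E\to\ell_{1}$ corresponds to $\widetilde{S}=S^{\prime}|_{c_{0}}\colon c_{0}\to E^{\prime}$ with $\widetilde{S}(e_{n})=S^{\prime}(e_{n}^{*})$, and both $S$ and $\widetilde{S}$ are compact precisely when $\sum_{n}S^{\prime}(e_{n}^{*})$ converges unconditionally in $E^{\prime}$. Hence the hypothesis yields a non-compact, and so non-weakly-compact, operator $c_{0}\to E^{\prime}$, which by Pe\l czy\'nski's theorem fixes a copy of $c_{0}$; thus $c_{0}$ embeds in $E^{\prime}$ and, by the theorem of Bessaga--Pe\l czy\'nski, $E$ contains a \emph{complemented} copy of $\ell_{1}$. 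Taking an onto projection $\pi\colon E\to\ell_{1}$, its adjoint $\pi^{\prime}\colon\ell_{\infty}\to E^{\prime}$ is an isomorphic, weak$^{*}$-to-weak$^{*}$ continuous embedding, so the vectors $\phi_{N}:=\pi^{\prime}\!\big(\sum_{n>N}e_{n}^{*}\big)$ form a seminormalized sequence in $E^{\prime}$ that is weak$^{*}$ null (since $\sum_{n>N}e_{n}^{*}\to0$ weak$^{*}$ in $\ell_{\infty}$) but not weakly null (a Banach limit does not vanish on $\sum_{n>N}e_{n}^{*}$, and $\pi^{\prime}$ preserves this). A weak$^{*}$ null sequence is relatively weakly compact only if it is weakly null, so $(\phi_{N})$ is not relatively weakly compact.

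Now I would take $G=c_{0}$, whose unit vector basis is $1$-unconditional with coordinate functionals $g_{n}^{\prime}$ the unit vectors of $\ell_{1}=(c_{0})^{\prime}$, and define $S\colon E\to c_{0}$ by $S(x)=(\phi_{N}(x))_{N}$. This is well defined ($(\phi_{N})$ is weak$^{*}$ null, so $\phi_{N}(x)\to0$) and bounded ($(\phi_{N})$ is seminormalized), and $S^{\prime}(g_{N}^{\prime})=\phi_{N}$; thus $(S^{\prime}(g_{n}^{\prime}))=(\phi_{N})$ is exactly the non-relatively-weakly-compact sequence produced above. This settles the $S$-side of Theorem~\ref{thm:(Teorema 11)}(b) and places us precisely in the situation of Corollary~\ref{thm:(Teorema 13)}.

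The residual step, which I expect to be the real obstacle, is the $R$-side: one needs $R\in\mathcal{L}(c_{0};F)$ with $(R(e_{n}))$ seminormalized basic in $F$. But for every bounded $R\colon c_{0}\to F$ the series $\sum_{n}R(e_{n})$ is weakly unconditionally Cauchy, so a seminormalized basic $(R(e_{n}))$ is automatically equivalent to the $c_{0}$-basis and forces a copy of $c_{0}$ in $F$. The hypothesis only furnishes a copy of $\ell_{1}$, which pairs naturally with $G=\ell_{1}$ instead; there, however, the $S$-side necessarily collapses, because for every $S\colon E\to\ell_{1}$ and every $\Lambda\in E^{\prime\prime}$ one has $\langle\Lambda,S^{\prime}(e_{n}^{*})\rangle=\langle S^{\prime\prime}\Lambda,e_{n}^{*}\rangle\to0$ (the atoms of the bounded finitely additive measure $S^{\prime\prime}\Lambda$ are summable), so $(S^{\prime}(e_{n}^{*}))$ is weakly null, hence relatively weakly compact. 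Reconciling the two demands with a single $G$ is the crux: if $F$ is known to contain a copy of $c_{0}$ the argument closes at once with $R$ the corresponding embedding; if $F$ contains only $\ell_{1}$ (for instance $F=\ell_{1}$), then $R$ cannot be realized through $G=c_{0}$, and the direct appeal to Theorem~\ref{thm:(Teorema 11)}(b) would have to be supplemented by a separate argument covering the case $c_{0}\not\hookrightarrow F$.
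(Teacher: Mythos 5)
Your verification of the $S$-side is correct and essentially optimal: the identification $\mathcal{L}(E;\ell_{1})\cong\mathcal{L}(c_{0};E^{\prime})$, the use of Pe\l czy\'nski's property $(V)$ of $c_{0}$ to get $c_{0}\hookrightarrow E^{\prime}$, the Bessaga--Pe\l czy\'nski theorem to get a complemented copy of $\ell_{1}$ in $E$, and the sequence $\phi_{N}=\pi^{\prime}\big(\sum_{n>N}e_{n}^{*}\big)$ (weak$^{*}$ null, not weakly null, hence not relatively weakly compact) are all sound. But, as you yourself concede in the last paragraph, the proposal does not prove the statement: with $G=c_{0}$ the $R$-side of Theorem \ref{thm:(Teorema 11)}(b) forces, again by Bessaga--Pe\l czy\'nski, a copy of $c_{0}$ inside $F$, while the hypothesis only provides a copy of $\ell_{1}$. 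When $c_{0}\hookrightarrow F$ the conclusion is already covered by Corollaries \ref{thm:(Teorema 13)} and \ref{thm:(Teorema 14)}, so the case you leave open --- $F\supset\ell_{1}$ but $c_{0}\not\hookrightarrow F$, e.g.\ $F=\ell_{1}$ itself --- is precisely the entire new content of Corollary \ref{thm:(Teorema 15)}. That is a genuine gap, not a residual detail. Moreover your observation that $G=\ell_{1}$ cannot work is correct (the coordinate functionals $(e_{n}^{*})$ are weakly null in $\ell_{\infty}$ because every $\mu\in ba(\mathbb{N})$ has summable atoms, so $(S^{\prime}(e_{n}^{*}))$ is always relatively weakly compact), and it can even be pushed further: if $(S^{\prime}(g_{n}^{\prime}))$ is not relatively weakly compact, then by Rosenthal's $\ell_{1}$-theorem (together with the fact that a weakly Cauchy unconditional basic sequence must be weakly null) some subsequence of $(g_{n}^{\prime})$ is equivalent to the $\ell_{1}$-basis, whence along it $\sum g_{n}$ is weakly unconditionally Cauchy and $(R(g_{n}))$ is equivalent to the $c_{0}$-basis; so no choice of $G$ lets Theorem \ref{thm:(Teorema 11)}(b) fire without $c_{0}\hookrightarrow F$. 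A cleverer $G$ will not rescue your plan.

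For comparison, the paper does not construct $R$ and $S$ at all: it observes that the hypotheses of Theorem \ref{thm:(Teorema 11)}(b) are verbatim those of Theorem \ref{thm:(Teorema 10)}(b), and cites Ghenciu's Corollary 5 for the fact that the present hypotheses on $E$ and $F$ realize them in the case $n=1$; the polynomial statement then follows by the tensor-product lifting of Theorem \ref{thm:(Teorema 11)}. So where the paper outsources the verification to \cite{IOANA}, you attempted to reconstruct it and correctly found that the natural realizations fail when $c_{0}\not\hookrightarrow F$. To close your argument you would have to either reproduce Ghenciu's own derivation of her Corollary 5 and lift it, or supply an independent argument for the case $c_{0}\not\hookrightarrow F$ (for instance a Diestel--Faires/Emmanuele-type argument in the spirit of Propositions \ref{cor 366} and \ref{cor 36}, noting that for $F=\ell_{1}$ one has $\mathcal{P}_{wK}(^{n}E;F)=\mathcal{P}_{K}(^{n}E;F)$ by the Schur property); as it stands, the proposal proves only a statement already contained in Corollaries \ref{thm:(Teorema 13)} and \ref{thm:(Teorema 14)}.
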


When $n=1$ Corollaries \ref{thm:(Teorema 13)}, \ref{thm:(Teorema 14)} and \ref{thm:(Teorema 15)} correspond to \cite[Corollaries 2,3 and 5]{IOANA}.
Ghenciu derived those corollaries by observing that $E$ and $F$ satisfy the hypothesis of Theorem \ref{thm:(Teorema 10)} $(b)$. Since the hypothesis
of Theorem \ref{thm:(Teorema 10)} $(b)$ coincide with the hypothesis of Theorem \ref{thm:(Teorema 11)} $(b)$, we see that Corollaries \ref{thm:(Teorema 13)}, \ref{thm:(Teorema 14)} and \ref{thm:(Teorema 15)} follow from Theorem \ref{thm:(Teorema 11)} $(b)$.

\begin{corollary}\label{cor 33}
 If $F$ contains a copy of $c_{0}$ and $E$ is infinite dimensional, then:
\begin{enumerate}
\item [(a)] $\mathcal{P}_{K}(^{n}E; F)$ is not complemented in
$\mathcal{P}(^{n}E; F)$ for every $n\in \mathbb{N}$.

\item [(b)] $\mathcal{P}_{w}(^{n}E; F)$ is not complemented in
$\mathcal{P}(^{n}E; F)$ for every $n\in \mathbb{N}$.
\end{enumerate}
\end{corollary}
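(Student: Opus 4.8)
The plan is to deduce both statements from Theorem \ref{thm:(Teorema 11)} $(a)$ (for part $(a)$) and Theorem \ref{thm:(Teorema 12)} (for part $(b)$), by exhibiting a single pair of operators $R,S$ that simultaneously satisfies the hypothesis common to those two theorems. The natural choice for the auxiliary space is $G=c_{0}$, equipped with its canonical unconditional basis $(e_{n})$ and coordinate functionals $(e^{\prime}_{n})$, the latter forming the canonical basis of $c_{0}^{\prime}=\ell_{1}$.

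First I would produce $R$. Since $F$ contains a copy of $c_{0}$, there is an isomorphic embedding $R\in\mathcal{L}(c_{0};F)$. Because $R$ is an isomorphism onto its range and $(e_{n})$ is the normalized basis of $c_{0}$, the sequence $(R(e_{n}))$ is basic and satisfies $\|R^{-1}\|^{-1}\leq\|R(e_{n})\|\leq\|R\|$, so it is a seminormalized basic sequence in $F$. This settles the first half of the hypothesis.

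The second half requires an operator $S\in\mathcal{L}(E;c_{0})$ for which $(S^{\prime}(e^{\prime}_{n}))$ fails to be relatively compact in $E^{\prime}$. Any operator into $c_{0}$ is given by a bounded, weak-star null sequence $(\varphi_{n})\subset E^{\prime}$ via $S(x)=(\varphi_{n}(x))_{n}$, and a direct computation shows $S^{\prime}(e^{\prime}_{n})=\varphi_{n}$. Thus it suffices to find a bounded weak-star null sequence in $E^{\prime}$ that is not relatively norm-compact. Here I would invoke the Josefson--Nissenzweig theorem: since $E$ is infinite dimensional, there exists $(\varphi_{n})\subset E^{\prime}$ with $\|\varphi_{n}\|=1$ for all $n$ and $\varphi_{n}\to 0$ in the weak-star topology. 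Such a sequence cannot be relatively compact, for a norm-convergent subsequence would have a limit that is simultaneously weak-star null (hence $0$) and of norm $1$, which is absurd. The well-definedness of $S$ into $c_{0}$ and the bound $\|S\|\leq 1$ follow at once from $\varphi_{n}(x)\to 0$ and $\|\varphi_{n}\|=1$.

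With these choices both requirements are met, so Theorem \ref{thm:(Teorema 11)} $(a)$ yields $(a)$ and Theorem \ref{thm:(Teorema 12)} yields $(b)$. The only non-routine ingredient, and the step I expect to be the crux, is the existence of the normalized weak-star null sequence in $E^{\prime}$; everything else is the verification that $S$ lands in $c_{0}$, is bounded, and has the claimed adjoint. The Josefson--Nissenzweig theorem supplies precisely this sequence for an arbitrary infinite-dimensional $E$, which is exactly where infinite-dimensionality of $E$ enters.
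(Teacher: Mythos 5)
Your proof is correct and follows essentially the same route as the paper: both deduce the corollary from Theorem \ref{thm:(Teorema 11)} $(a)$ and Theorem \ref{thm:(Teorema 12)} by checking their common hypothesis. The only difference is that the paper simply cites Ghenciu's observation that $E$ and $F$ satisfy the hypothesis of Theorem \ref{thm:(Teorema 10)} $(a)$, whereas you spell out that verification explicitly (the embedding $R$ of $c_{0}$ into $F$ and the operator $S$ built from a Josefson--Nissenzweig sequence), which is precisely the argument behind the cited result.
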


\begin{corollary}\label{cor 34}
 If $E$ contains a complemented copy of $\ell_{1}$ and $F$ is infinite dimensional, then:
\begin{enumerate}
\item [(a)] $\mathcal{P}_{K}(^{n}E; F)$ is not complemented in
$\mathcal{P}(^{n}E; F)$ for every $n\in \mathbb{N}$.

\item [(b)] $\mathcal{P}_{w}(^{n}E; F)$ is not complemented in
$\mathcal{P}(^{n}E; F)$ for every $n\in \mathbb{N}$.
\end{enumerate}
\end{corollary}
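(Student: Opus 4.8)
The plan is to deduce both parts from the machinery already in place: part (a) will follow from Theorem~\ref{thm:(Teorema 11)}(a) and part (b) from Theorem~\ref{thm:(Teorema 12)}, once I exhibit a Banach space $G$ with an unconditional basis together with operators $R$ and $S$ meeting the common hypotheses of those two theorems. Since both require exactly the same data---a space $G$ with unconditional basis $(g_{n})$ and coordinate functionals $(g'_{n})$, an $R\in\mathcal{L}(G;F)$ with $(R(g_{n}))$ seminormalized basic in $F$, and an $S\in\mathcal{L}(E;G)$ with $(S'(g'_{n}))$ not relatively compact in $E'$---it suffices to produce this data once. The choice dictated by the hypothesis on $E$ is $G=\ell_{1}$, with its canonical unconditional basis $(e_{n})$; its coordinate functionals $(e^{*}_{n})$ are the unit vectors of $(\ell_{1})'=\ell_{\infty}$, and they are \emph{not} relatively compact there, since $\|e^{*}_{n}-e^{*}_{m}\|_{\infty}=1$ for $n\neq m$.

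To build $S$, I would use that $E$ contains a complemented copy of $\ell_{1}$: this yields $u\in\mathcal{L}(\ell_{1};E)$ and $v\in\mathcal{L}(E;\ell_{1})$ with $v\circ u=I_{\ell_{1}}$. Set $S=v\in\mathcal{L}(E;\ell_{1})=\mathcal{L}(E;G)$. Dualizing gives $u'\circ v'=I_{\ell_{\infty}}$, so the bounded operator $u'$ carries $(v'(e^{*}_{n}))=(S'(g'_{n}))$ back to $(e^{*}_{n})$. Were $(S'(g'_{n}))$ relatively compact in $E'$, its image $(e^{*}_{n})$ under $u'$ would be relatively compact in $\ell_{\infty}$, contradicting the previous paragraph; hence $(S'(g'_{n}))$ is not relatively compact, verifying the second hypothesis.

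To build $R$, I would invoke that an infinite-dimensional $F$ contains a normalized basic sequence $(f_{n})$, and define $R\in\mathcal{L}(\ell_{1};F)$ by $R((x_{k})_{k})=\sum_{k}x_{k}f_{k}$; this converges with $\|R\|\leq 1$ because $\sum_{k}|x_{k}|\,\|f_{k}\|=\|(x_{k})\|_{1}$, and $R(g_{n})=f_{n}$ is a seminormalized basic sequence in $F$, verifying the first hypothesis. With $G$, $R$, and $S$ in hand, Theorem~\ref{thm:(Teorema 11)}(a) gives (a) and Theorem~\ref{thm:(Teorema 12)} gives (b), for every $n\in\mathbb{N}$. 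The argument is essentially a matter of selecting the right $G$; the only genuine point to check is the non-compactness of $(S'(g'_{n}))$, and I expect the lifting identity $u'\circ v'=I_{\ell_{\infty}}$ to be the crux, since it is precisely what converts ``complemented copy of $\ell_{1}$'' into the required failure of relative compactness.
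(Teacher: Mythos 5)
Your proof is correct and takes essentially the same route as the paper: both deduce the corollary from Theorems~\ref{thm:(Teorema 11)}(a) and \ref{thm:(Teorema 12)} by verifying their common hypotheses with $G=\ell_{1}$. The only difference is that the paper outsources this verification to Ghenciu's treatment of the $n=1$ case in \cite{IOANA}, whereas you spell it out explicitly (projection onto the complemented copy of $\ell_{1}$ for $S$, a normalized basic sequence in $F$ summed against $\ell_{1}$-coefficients for $R$), which is precisely the intended argument.
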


When $n=1$ Corollaries \ref{cor 33} and \ref{cor 34} correspond to \cite[Corollaries 4 and 6]{IOANA}.
Ghenciu derived those corollaries by observing that $E$ and $F$ satisfy the hypothesis of Theorem \ref{thm:(Teorema 10)} $(a)$. Since the hypothesis
of Theorem \ref{thm:(Teorema 10)} $(a)$ coincide with the hypothesis of Theorems \ref{thm:(Teorema 11)} $(a)$ and \ref{thm:(Teorema 12)}, we see that Corollaries \ref{cor 33} and \ref{cor 34} follow from Theorems \ref{thm:(Teorema 11)} $(a)$ and \ref{thm:(Teorema 12)}.

\begin{corollary}\label{cor 35}
 If $E$ contains a copy of $\ell_{1}$ and $F$ contains a copy of $\ell_{p}$, with $2\leq p<\infty$, then:
\begin{enumerate}
\item [(a)] $\mathcal{P}_{K}(^{n}E; F)$ is not complemented in
$\mathcal{P}(^{n}E; F)$ for every $n\in \mathbb{N}$.

\item [(b)] $\mathcal{P}_{w}(^{n}E; F)$ is not complemented in
$\mathcal{P}(^{n}E; F)$ for every $n\in \mathbb{N}$.
\end{enumerate}
\end{corollary}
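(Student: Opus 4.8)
The plan is to deduce both parts from the common hypothesis of Theorems \ref{thm:(Teorema 11)}$(a)$ and \ref{thm:(Teorema 12)}: conclusion $(a)$ is exactly Theorem \ref{thm:(Teorema 11)}$(a)$ and conclusion $(b)$ is exactly Theorem \ref{thm:(Teorema 12)}, and both require the same data, namely a Banach space $G$ with an unconditional basis $(g_n)$ and coordinate functionals $(g'_n)$, together with operators $R\in\mathcal{L}(G;F)$ and $S\in\mathcal{L}(E;G)$ such that $(R(g_n))$ is a seminormalized basic sequence in $F$ and $(S'(g'_n))$ is not relatively compact in $E'$. I would take $G=\ell_p$, whose canonical basis $(g_n)=(e_n)$ is unconditional and whose coordinate functionals $(g'_n)$ are the canonical basis of $(\ell_p)'=\ell_q$, with $1/p+1/q=1$. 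The operator $R$ is immediate: since $F$ contains a copy of $\ell_p$, pick a seminormalized basic sequence $(u_n)\subset F$ equivalent to the $\ell_p$-basis and let $R\colon\ell_p\to F$ be the induced isomorphic embedding, so that $(R(g_n))=(u_n)$ is seminormalized and basic.

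The heart of the argument is the construction of $S\colon E\to\ell_p$, and this is where the restriction $2\le p<\infty$ enters. Let $M=\overline{[x_k:k\in\mathbb{N}]}\subset E$ be a copy of $\ell_1$, with $(x_k)$ equivalent to the $\ell_1$-basis via an isomorphism $\theta\colon M\to\ell_1$, $\theta(x_k)=e_k$, and let $(m_k^{*})\subset M'$ be the associated biorthogonal functionals. Consider $T_0=i_{1,2}\circ\theta\colon M\to\ell_2$, where $i_{1,2}\colon\ell_1\to\ell_2$ is the formal inclusion, so that $T_0(x_k)=e_k\in\ell_2$. By Grothendieck's theorem every operator from $\ell_1$ into a Hilbert space is absolutely summing, hence $T_0$ is $2$-summing. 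Since $2$-summing operators into a Hilbert space have the extension property, $T_0$ extends to a bounded operator $S_2\colon E\to\ell_2$ with $S_2|_M=T_0$. Finally set $S=\iota\circ S_2\colon E\to\ell_p$, where $\iota\colon\ell_2\hookrightarrow\ell_p$ is the formal inclusion, which is bounded precisely because $p\ge 2$.

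It then remains to check that $(S'(g'_n))$ is not relatively compact in $E'$. Writing $S'=S_2'\circ\iota'$ and noting $\iota'(g'_n)=e_n\in\ell_2$, one gets $S'(g'_n)=S_2'(e_n)$. For $x=\sum_k a_kx_k\in M$ we have $S_2x=T_0x=(a_k)_k$, so $\langle S_2'(e_n),x\rangle=a_n=m_n^{*}(x)$; that is, $S'(g'_n)|_M=m_n^{*}$. Since $M\cong\ell_1$, the biorthogonal sequence satisfies $\|m_n^{*}-m_m^{*}\|_{M'}\ge c>0$ for $n\ne m$, and restriction to $M$ does not increase norms, so $\|S'(g'_n)-S'(g'_m)\|_{E'}\ge c$ for $n\ne m$. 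Thus $(S'(g'_n))$ is $c$-separated and has no convergent subsequence, hence is not relatively compact in $E'$. With $R$, $S$ and $G=\ell_p$ in hand, part $(a)$ follows from Theorem \ref{thm:(Teorema 11)}$(a)$ and part $(b)$ from Theorem \ref{thm:(Teorema 12)}.

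The main obstacle is precisely the construction of $S$: Hahn--Banach extensions of the $m_k^{*}$ only yield an $\ell_\infty$-valued bounded operator, and any naive damping to force $\ell_p$-summability destroys non-compactness. What rescues the argument is that, by Grothendieck's theorem, the $\ell_1$-to-$\ell_2$ formal identity is automatically $2$-summing, so it extends through $E$ with values in the Hilbert space $\ell_2$; the hypothesis $p\ge 2$ is then used only to push $\ell_2$ into $\ell_p$. I would double-check the extension property of $2$-summing operators into Hilbert spaces (via Pietsch factorization, extending the Pietsch measure along the restriction map $B_{E'}\to B_{M'}$, and an orthogonal projection in $L_2$), as this is the one step that is not completely formal.
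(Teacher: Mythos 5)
Your proof is correct, and it ends at the same place as the paper's — feeding a triple $(G,R,S)$ into Theorems \ref{thm:(Teorema 11)}(a) and \ref{thm:(Teorema 12)} — but the construction of that triple is genuinely different. The paper's proof is three lines: following Emmanuele, it cites Pelczynski's theorem \cite{PELC} that any Banach space containing a copy of $\ell_1$ has a quotient isomorphic to $\ell_2$, takes $G=\ell_2$, $S:E\to\ell_2$ the quotient map, and $R:\ell_2\hookrightarrow\ell_p\subset F$ the formal inclusion; the hypotheses then hold automatically, because the adjoint of a quotient map is an isomorphic embedding, so $(S'(e_n))$ is separated and hence not relatively compact. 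Your construction of $S$ via Grothendieck's theorem plus the extension theorem for $2$-summing operators into Hilbert spaces is, in substance, the standard proof of that very theorem of Pelczynski, so you have effectively unpacked the paper's citation into a self-contained argument. The differences this forces: you extend the dense-range but non-surjective formal identity $i_{1,2}$ rather than a quotient map $\ell_1\to\ell_2$, so your $S_2$ need not be a quotient map, and you must therefore verify non-relative-compactness of $(S'(g'_n))$ by hand — your restriction-to-$M$ argument with the separated biorthogonal functionals does this correctly. A further difference is cosmetic: you take $G=\ell_p$ and absorb the inclusion $\ell_2\hookrightarrow\ell_p$ into $S$, while the paper takes $G=\ell_2$ and absorbs it into $R$; both choices satisfy the hypotheses equally well. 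The one step you flagged for checking — that a $2$-summing operator into a Hilbert space extends from a subspace $M\subset E$ to all of $E$ — is indeed a standard result (Pietsch factorization, lifting the Pietsch measure along the restriction map $B_{E'}\to B_{M'}$, then an orthogonal projection in $L_2$, exactly as you sketched), so your argument is complete. What the paper's route buys is brevity by citation; what yours buys is independence from the Pelczynski reference, at the cost of invoking the summing-operator machinery explicitly.
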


\begin{proof}
We follow an argument of Emmanuele \cite[p. 334 ]{EM}. By a result of Pelczynski \cite{PELC}, if $E$ contains a copy of $\ell_{1}$,
then $E$ has a quotient isomorphic to $\ell_{2}$ (see also the proof of \cite{ARONLIBRO}). Let $S:E\rightarrow \ell_{2}$ be the quotient
mapping, and let $R:\ell_{2}\hookrightarrow \ell_{p}\subset F$ be the natural inclusion.
Since $S^{\prime}:\ell_{2}\rightarrow E^{\prime}$ is an embedding, the hypothesis of Theorems \ref{thm:(Teorema 11)} $(a)$ and \ref{thm:(Teorema 12)}
are clearly satisfied.
\end{proof}

\begin{proposition}\label{cor 366}
Let $E$ and $F$ be infinite dimensional Banach spaces. If $\mathcal{P}_{K} (^{n}E; F)$ contains a copy of $c_{0}$, then $\mathcal{P}_{K} (^{n}E; F)$ is not complemented in $\mathcal{P} (^{n}E; F)$.
\end{proposition}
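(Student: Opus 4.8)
The plan is to transfer the assertion to the linear setting by means of Ryan's isomorphism and then invoke the Emmanuele--John theorem quoted in the Introduction, exactly the device already exploited in the proof of Theorem \ref{thm:(Teorema 11)} $(a)$. Write $G=\hat{\otimes}_{n,s,\pi}E$. The first step is to record that $G$ is infinite dimensional. This is immediate: by Blasco's theorem \cite[Theorem 3]{BLASCO} the space $E$ is isomorphic to a complemented subspace of $G$, and since $E$ is infinite dimensional, so is $G$.

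Next I would recall Ryan's isomorphism
$$\Phi: P\in\mathcal{P}(^{n}E;F)\longmapsto T_{P}\in\mathcal{L}(G;F),$$
which, by the same result \cite{RYAN} used in Theorem \ref{thm:(Teorema 11)}, carries $\mathcal{P}_{K}(^{n}E;F)$ isomorphically onto $\mathcal{L}_{K}(G;F)$. The two properties relevant here---containing an isomorphic copy of $c_{0}$ and being complemented in the ambient space---are both preserved by a surjective isomorphism that maps one pair of spaces onto the other. Consequently the hypothesis that $\mathcal{P}_{K}(^{n}E;F)$ contains a copy of $c_{0}$ transfers, via $\Phi$, into the statement that $\mathcal{L}_{K}(G;F)$ contains a copy of $c_{0}$.

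At this point I would apply the Emmanuele--John result stated after Theorem \ref{thm:(Teorema 10)}: for infinite dimensional Banach spaces, if $c_{0}$ embeds in $\mathcal{L}_{K}(G;F)$, then $\mathcal{L}_{K}(G;F)$ is not complemented in $\mathcal{L}(G;F)$ (see \cite[Theorem 2]{EM} and \cite[Theorem 1]{KA}). Since both $G$ and $F$ are infinite dimensional, this applies and yields that $\mathcal{L}_{K}(G;F)$ is not complemented in $\mathcal{L}(G;F)$. Pulling this conclusion back through the isomorphism $\Phi$ shows that $\mathcal{P}_{K}(^{n}E;F)$ is not complemented in $\mathcal{P}(^{n}E;F)$, which is what we want.

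The argument is in essence a faithful dictionary translation, so I do not expect a genuine obstacle. The only two points deserving care are: first, confirming that $G=\hat{\otimes}_{n,s,\pi}E$ is infinite dimensional, so that the linear theorem is actually applicable (this is the role played by Blasco's theorem); and second, verifying that Ryan's isomorphism simultaneously respects compactness, so that it maps the pair $\bigl(\mathcal{P}_{K}(^{n}E;F),\mathcal{P}(^{n}E;F)\bigr)$ onto the pair $\bigl(\mathcal{L}_{K}(G;F),\mathcal{L}(G;F)\bigr)$ and thereby transports both the $c_{0}$-embedding hypothesis and the non-complementation conclusion intact between the polynomial and the linear frameworks.
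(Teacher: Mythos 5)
Your proposal is correct and follows essentially the same route as the paper: Ryan's isomorphism $\mathcal{P}(^{n}E;F)\cong\mathcal{L}(\hat{\otimes}_{n,s,\pi}E;F)$, which carries $\mathcal{P}_{K}(^{n}E;F)$ onto $\mathcal{L}_{K}(\hat{\otimes}_{n,s,\pi}E;F)$, followed by the Emmanuele--John theorem in the linear setting. Your extra verification that $\hat{\otimes}_{n,s,\pi}E$ is infinite dimensional (via Blasco's theorem) is a sensible detail that the paper leaves implicit, but it does not change the argument.
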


\begin{proof}
By an aforementioned result of Ryan \cite{RYAN} we have that $P\in \mathcal{P}_{K}(^{n}E; F)$ if and only if $T_{P}\in\mathcal{L}_{K}(\hat{\otimes}_{n,s,\pi}E; F)$. Thus the result follows from \cite[Theorem 2]{EM} or \cite[Theorem 1]{KA}.

\end{proof}

The next proposition is a polynomial version of \cite[Theorem 2]{EM} and \cite[Theorem 1]{KA}. The proof is based in ideas of \cite[Corollary 11 ]{LEWIS}.
\begin{proposition}\label{cor 36}
Let $E$ be an infinite dimensional Banach space and $n>1$. If $\mathcal{P}_{w} (^{n}E; F)$ contains a copy of $c_{0}$, then $\mathcal{P}_{w} (^{n}E; F)$ is not complemented in $\mathcal{P} (^{n}E; F)$.
\end{proposition}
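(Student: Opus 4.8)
The plan is to reduce the statement to the classical fact that $c_{0}$ is not complemented in $\ell_{\infty}$ (Phillips). Since $\mathcal{P}_{w}(^{n}E;F)$ contains a copy of $c_{0}$, I fix a seminormalized sequence $(P_{k})\subseteq\mathcal{P}_{w}(^{n}E;F)$ equivalent to the unit vector basis of $c_{0}$; in particular $\sum_{k}P_{k}$ is weakly unconditionally Cauchy but fails to be norm convergent, and the embedding $\iota:c_{0}\hookrightarrow\mathcal{P}_{w}(^{n}E;F)$, $\sigma\mapsto\sum_{k}\sigma_{k}P_{k}$, is an isomorphism onto its range. Because $n>1$ and $E$ is infinite dimensional, $\mathcal{P}_{w}(^{n}E;F)$ is a proper closed subspace of $\mathcal{P}(^{n}E;F)$, which is the situation one needs. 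Assume, towards a contradiction, that there is a projection $\pi:\mathcal{P}(^{n}E;F)\to\mathcal{P}_{w}(^{n}E;F)$.

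The core construction, following the ideas of \cite[Corollary 11]{LEWIS}, has two ingredients. First, I would build a bounded linear map $V:\ell_{\infty}\to\mathcal{P}(^{n}E;F)$ extending $\iota$, that is, with $V(e_{k})=P_{k}$ and $V|_{c_{0}}=\iota$; the subtlety is that $\sum_{k}\sigma_{k}P_{k}$ converges in $\mathcal{P}(^{n}E;F)$ only for $\sigma\in c_{0}$, so $V$ cannot be given by that series and must be produced by hand, realizing an arbitrary $\sigma\in\ell_{\infty}$ by a genuine bounded $n$-homogeneous polynomial after a gliding hump passage to a subsequence. Second, I would choose coordinate functionals $\Lambda_{k}\in\mathcal{P}(^{n}E;F)'$ of evaluation type, $\Lambda_{k}(Q)=y_{k}^{*}(Q(x_{k}))$, with $(y_{k}^{*})$ bounded in $F'$ and $(x_{k})$ a bounded weakly null sequence in $E$, arranged so that $\Lambda_{k}(P_{j})=\delta_{kj}$, whence $\Lambda_{k}(V\sigma)=\sigma_{k}$. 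The decisive feature is that, since every $Q\in\mathcal{P}_{w}(^{n}E;F)$ is weakly continuous on bounded sets and $x_{k}\rightharpoonup 0$, one gets $Q(x_{k})\to 0$ in norm, so $\Lambda_{k}(Q)\to 0$ for every $Q\in\mathcal{P}_{w}(^{n}E;F)$; that is, $(\Lambda_{k})$ is weak-star null on $\mathcal{P}_{w}(^{n}E;F)$. This is precisely the point where weak continuity is exploited, and it is what distinguishes the present argument from the $\mathcal{P}_{K}$ case treated in Proposition \ref{cor 366} via Ryan's identification.

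Granting these two ingredients the contradiction is immediate. Define $u:\ell_{\infty}\to\ell_{\infty}$ by $u(\sigma)=(\Lambda_{k}(\pi(V\sigma)))_{k}$; it is bounded since $\sup_{k}\|\Lambda_{k}\|<\infty$ and $\pi,V$ are bounded. For $\sigma\in c_{0}$ one has $V\sigma\in\mathcal{P}_{w}(^{n}E;F)$, hence $\pi(V\sigma)=V\sigma$ and $u(\sigma)_{k}=\Lambda_{k}(V\sigma)=\sigma_{k}$, so $u|_{c_{0}}=\mathrm{id}$. For arbitrary $\sigma\in\ell_{\infty}$ the element $\pi(V\sigma)$ lies in $\mathcal{P}_{w}(^{n}E;F)$, so $\Lambda_{k}(\pi(V\sigma))\to 0$ by the weak-star nullity above, giving $u(\sigma)\in c_{0}$. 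Thus $u$ is a bounded projection of $\ell_{\infty}$ onto $c_{0}$, which is impossible, and this contradiction proves that $\mathcal{P}_{w}(^{n}E;F)$ is not complemented. I expect the genuine difficulty to be concentrated in the first ingredient: manufacturing the extension $V$ (equivalently, a copy of $\ell_{\infty}$ inside $\mathcal{P}(^{n}E;F)$ lying over the given copy of $c_{0}$) together with evaluation functionals that are simultaneously biorthogonal to $(P_{k})$ and supported on a weakly null sequence in $E$. This is the step where $n>1$ and the infinite dimensionality of $E$ are used, and where the polynomial adaptation of \cite[Corollary 11]{LEWIS} must be carried out carefully, since bounded operators from $c_{0}$ do not extend to $\ell_{\infty}$ in general and the extension here relies on the specific structure of the $(P_{k})$.
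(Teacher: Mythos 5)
Your proposal is not a proof but a plan whose entire substance is deferred to the two ``granted'' ingredients, and the second ingredient is not merely unproven --- it is false in the generality in which you need it. Take $E=\ell_{1}$, $F=\mathbb{K}$ and the diagonal polynomials $P_{j}(x)=x_{j}^{n}$: since $\bigl|\sum_{j}\sigma_{j}x_{j}^{n}\bigr|\leq\|\sigma\|_{\infty}\|x\|_{1}^{n}$ and $P_{j}(e_{j})=1$, the sequence $(P_{j})$ spans an isometric copy of $c_{0}$ inside $\mathcal{P}_{w}(^{n}\ell_{1})$ (each $P_{j}$ is of finite type), so the hypotheses of the proposition are satisfied. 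But $\ell_{1}$ has the Schur property, so every bounded weakly null sequence $(x_{k})$ is norm null, whence $|y_{k}^{*}(P_{j}(x_{k}))|\leq\|y_{k}^{*}\|\,\|P_{j}\|\,\|x_{k}\|^{n}\to 0$ uniformly in $j$ for any bounded $(y_{k}^{*})$; this is incompatible with $\Lambda_{k}(P_{k})=1$, and passing to subsequences does not help. Thus no biorthogonal system of evaluation type at weakly null points can exist, and your strategy collapses exactly at the point where weak continuity was supposed to be exploited. Ingredient (a) is in no better shape: you give no construction of $V$, and by Rosenthal's theorem any operator $V:\ell_{\infty}\to\mathcal{P}(^{n}E;F)$ whose restriction to $c_{0}$ is an embedding must fix a copy of $\ell_{\infty}$, so manufacturing $V$ is essentially the full difficulty of the theorem, not a technical afterthought. (A minor additional error: your claim that $n>1$ and $\dim E=\infty$ force $\mathcal{P}_{w}(^{n}E;F)\neq\mathcal{P}(^{n}E;F)$ is false --- on Tsirelson-type spaces every continuous polynomial is weakly continuous on bounded sets --- though nothing in your argument uses it.)

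The missing idea is a preliminary reduction, and it is precisely how the paper's proof begins: by Corollary \ref{cor 33} and \cite[Lemma 5 ]{GONZALEZ M} one may assume without loss of generality that $F$ contains no copy of $c_{0}$ and $E$ contains no complemented copy of $\ell_{1}$, since in those cases the conclusion already holds; note that this reduction disposes of the Schur example above before the main argument starts. After the reduction, the analogue of your $V$ comes for free: the series $\sum_{i}P_{i}$ is weakly unconditionally Cauchy, hence $\sum_{i}P_{i}(x)$ is weakly unconditionally Cauchy in $F$ for each $x$, and since now $F\not\supset c_{0}$, Bessaga--Pe\l czy\'nski gives unconditional convergence of $\sum_{i\in A}P_{i}(x)$ for every $A\subset\mathbb{N}$; this defines the finitely additive measure $\mu(A)(x)=\sum_{i\in A}P_{i}(x)$ with values in $\mathcal{P}(^{n}E;F)$, which is the paper's $\ell_{\infty}$-type object lying over $(P_{i})$. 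Your second ingredient is then replaced by a softer device that needs no biorthogonality at all: if a projection $\pi$ onto $\mathcal{P}_{w}(^{n}E;F)$ existed, it would fix each $P_{i}$, and since $(P_{i})$ is seminormalized the measure $\pi\circ\mu$ would fail to be strongly additive; the Diestel--Faires theorem would then embed $\ell_{\infty}$ into $\mathcal{P}_{w}(^{n}E;F)$, contradicting \cite[Theorem 3 ]{GONZALEZ M}, which applies after the reduction. So while your Phillips-style endgame is formally correct once both ingredients are granted, the proof cannot be completed along the lines you propose without first performing the paper's reduction, at which point the measure-theoretic argument is both available and simpler.
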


\begin{proof}
 By Corollary \ref{cor 33} and \cite[Lemma 5 ]{GONZALEZ M} we may suppose without loss of generality that $F$ contains no copy of $c_{0}$ and $E$ contains no complemented copy of $\ell_{1}$. By \cite[Theorem 3 ]{GONZALEZ M} $\mathcal{P}_{w} (^{n}E; F)$ contains no copy of $\ell_{\infty}$. Let $(P_{i})$ be a copy of the unit vector basis $(e_{i})$ of $c_{0}$ in $\mathcal{P}_{w} (^{n}E; F)$. Then $$\sup\bigg\{\bigg\|\sum_{i\in F}e_{i}\bigg\| ; F\subset\mathbb{N}, F finite\bigg\}=1.$$ By a result of Bessaga and Pelczynski \cite{BES} (see also \cite[p.44, Theorem 6]{DIESTELL}) the series $\displaystyle\sum_{i=1}^{\infty} e_{i}$ is weakly unconditionally Cauchy in $c_{0}$. This implies that the series
 $\displaystyle\sum_{i=1}^{\infty} P_{i}$ is weakly unconditionally Cauchy in $\mathcal{P}_{w} (^{n}E; F)$. For every $\varphi\in F^{\prime}$ and $x\in E$ we consider the continuous linear functional $$\psi: P\in\mathcal{P}_{w} (^{n}E; F)\rightarrow \varphi(P(x))\in \mathbb{C}.$$ Since the series
 $\displaystyle\sum_{i=1}^{\infty} P_{i}$ is weakly unconditionally Cauchy in $\mathcal{P}_{w} (^{n}E; F)$,
 $\displaystyle\sum_{i=1}^{\infty}|\psi( P_{i})|=\displaystyle\sum_{i=1}^{\infty}|\varphi( P_{i}(x))|<\infty$ for every $\varphi\in F^{\prime}$ and $x\in E$. This shows that
 $\displaystyle\sum_{i=1}^{\infty} P_{i}(x)$ is weakly unconditionally Cauchy in $F$ for each $x\in E$. Finally since $F$ contains no copy of $c_{0}$, an
 application of \cite[p.45, Theorem 8 ]{DIESTELL} shows that $\displaystyle\sum_{i=1}^{\infty} P_{i}(x)$ converges unconditionally in $F$ for each $x\in E$. Let $\mu: \wp(\mathbb{N})\rightarrow \mathcal{P} (^{n}E; F)$ be the finitely additive vector measure defined by $\mu(A)(x)=\displaystyle\sum_{i\in A}P_{i}(x)$ for each $x\in E$ and $A\subset \mathbb{N}$.
Suppose there is a projection $\pi:\mathcal{P}(^{n}E; F)\rightarrow \mathcal{P}_{w} (^{n}E; F)$. Then $\pi(P_{i})=P_{i}$ for each $i\in \mathbb{N}$.
 If the sequence $(\|P_{i}\|)$ does not converge to zero, then there is $\epsilon>0$ and a subsequence $(i_{k})$ of $\mathbb{N}$, such that $\|P_{i_{k}}\|>\epsilon$ for each $k\in \mathbb{N}$. But this implies that the measure $\pi\circ\mu:\wp(\mathbb{N})\rightarrow \mathcal{P}_{w}(^{n}E; F)$ is not strongly additive. Then the Diestel-Faires Theorem would imply that $\mathcal{P}_{w} (^{n}E; F)$ contains a copy of $\ell_{\infty}$. Therefore $\|P_{i}\|\rightarrow 0$, but this is absurd too, because $(P_{i})$ is a copy of $(e_{i})$. This complete the proof.
\end{proof}




The following theorem is a polynomial version of \cite[Theorem 2 ]{KA}.

\begin{theorem} \label{thm:(Teorema 17)}
Let $E$ and $F$ be Banach spaces and $P\in \mathcal{P}(^{n}E; F)$ such that $P\notin \mathcal{P}_{w}(^{n}E; F)$. Suposse that $P$ admits a factorization $P=Q\circ T$ through a Banach space $G$ with an unconditional finite dimensional expansion of the identity, where $T\in \mathcal{L}(E;G)$ and $Q\in \mathcal{P}(^{n}G;F)$. Then $\mathcal{P}_{w} (^{n}E;F)$ contains a copy of $c_{0}$ and thus $\mathcal{P}_{w} (^{n}E;F)$ is not complemented in
$\mathcal{P}(^{n}E;F)$.
\end{theorem}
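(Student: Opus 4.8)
The plan is to produce an isomorphic copy of $c_0$ inside the closed subspace $\mathcal{P}_w(^nE;F)$ and then invoke the non-complementation results already established: since $P\notin\mathcal{P}_w(^nE;F)$ the space $E$ must be infinite dimensional (on a finite dimensional $E$ the weak and norm topologies agree on bounded sets, so $\mathcal{P}_w=\mathcal{P}$), and then for $n>1$ the copy of $c_0$ forces non-complementation by Proposition \ref{cor 36}, while for $n=1$ one has $\mathcal{P}_w(^1E;F)=\mathcal{P}_K(^1E;F)=\mathcal{L}_K(E;F)$ and one applies Proposition \ref{cor 366} (equivalently \cite[Theorem 2]{EM}, \cite[Theorem 1]{KA}). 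So everything reduces to building the copy of $c_0$. First I would unwind the hypothesis: there is a bounded sequence $(x_i)\subset E$ with $x_i\rightharpoonup x_0$ but $\|P(x_i)-P(x_0)\|\ge\delta>0$. Pushing through the weak-to-weak continuous linear map $T$ and writing $y_i=Tx_i$, $y_0=Tx_0$, I obtain a bounded sequence $y_i\rightharpoonup y_0$ in $G$ with $\|Q(y_i)-Q(y_0)\|\ge\delta$. I would also record the unconditional constant $\lambda=\sup\{\|A_M\|:M\subset\mathbb{N}\}<\infty$, where $A_M=\sum_{k\in M}A_k$; this supremum is finite because an unconditional finite dimensional expansion of the identity is boundedly multiplier convergent.

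Next I would polarize. Writing $\check Q$ for the symmetric $n$-linear form of $Q$, $z^r$ for the argument $z$ repeated $r$ times, and $z_i=y_i-y_0\rightharpoonup 0$, one has $Q(y_i)-Q(y_0)=\sum_{r=1}^n\binom{n}{r}\check Q\big(z_i^{\,r},y_0^{\,n-r}\big)$. Since the left-hand side has norm $\ge\delta$, after passing to a subsequence there is a fixed $r\in\{1,\dots,n\}$ with $\|\check Q(z_i^{\,r},y_0^{\,n-r})\|\ge\delta'>0$ for all $i$. Fix $\varphi_0\in E'$ with $\varphi_0(x_0)=1$. The candidate members of the $c_0$-copy are the finite-type polynomials $P_j\in\mathcal{P}(^nE;F)$ defined, after a gliding-hump choice of pairwise disjoint finite blocks $I_j\subset\mathbb{N}$ and a subsequence $(x_{i_j})$, by $P_j(x)=\varphi_0(x)^{\,n-r}\,\check Q\big((A_{I_j}Tx)^{r},y_0^{\,n-r}\big)$; the factor $\varphi_0(x)^{\,n-r}$ pads the degree back to $n$, as in the operator $A$ used in Theorem \ref{thm:(Teorema 12)}. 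Each $P_j$ factors through a finite dimensional space, hence $P_j\in\mathcal{P}_w(^nE;F)$ by Aron--Prolla and Aron--Hervés--Valdivia \cite{ARON,VALDIVIA}. The blocks $I_j$ are chosen (via the strong convergence $B_m\to I$ together with $z_{i_j}\rightharpoonup0$) so that $A_{I_j}Tx_{i_j}$ essentially recovers $z_{i_j}$ while $A_{I_j}y_0\approx0$ and $\varphi_0(x_{i_j})\to1$; this yields the lower estimate $\|P_j\|\ge\|P_j(x_{i_j})\|\ge\delta''>0$, so $\sum_jP_j$ cannot be unconditionally convergent.

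The crucial and most delicate point is the upper $c_0$-estimate, equivalently that $\sum_jP_j$ is weakly unconditionally Cauchy, i.e. $\sup_{J}\|\sum_{j\in J}P_j\|<\infty$. Here the homogeneity of $Q$ is the main obstacle: unlike the linear case treated by John, the assignment $M\mapsto Q(A_MT\,\cdot)$ is not additive, so the block contributions do not collapse into a single multiplier operator. When $r=1$ the difficulty disappears, since then $\sum_{j\in J}a_jP_j(x)=\varphi_0(x)^{\,n-1}\check Q\big(\sum_{j\in J}a_jA_{I_j}Tx,\;y_0^{\,n-1}\big)$ and $\sum_{j\in J}a_jA_{I_j}$ is a single multiplier with $\|\sum_{j\in J}a_jA_{I_j}\|\le\lambda\sup_j|a_j|$, giving the bound at once. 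For $r\ge2$ the quantity $\check Q((A_{I_j}Tx)^{r},\cdot)$ is genuinely non-linear in the block, and the plan is to refine the gliding hump so that the blocks increase rapidly enough that the off-diagonal multilinear terms (those pairing $A_{I_{j}}Tx$ with $A_{I_{j'}}Tx$, $j\neq j'$) become summably small, uniformly on the unit ball, while the diagonal part is again controlled by $\lambda$. Taming these higher-order cross terms is exactly where I expect the real work to lie.

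Granting the upper estimate, $\sum_jP_j$ is weakly unconditionally Cauchy but, by $\|P_j\|\ge\delta''$, not unconditionally convergent, so the Bessaga--Pełczyński theorem (\cite{BES}; see also \cite[p.~45, Theorem 8]{DIESTELL}) furnishes a subsequence of $(P_j)$ equivalent to the unit vector basis of $c_0$. As $\mathcal{P}_w(^nE;F)$ is a closed subspace of $\mathcal{P}(^nE;F)$ containing every $P_j$, this is the sought copy of $c_0$ in $\mathcal{P}_w(^nE;F)$, and the non-complementation follows as indicated in the first paragraph. The selection of blocks rendering the off-diagonal terms negligible in the case $r\ge2$ is, I anticipate, the principal technical hurdle of the argument.
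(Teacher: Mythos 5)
Your overall frame (produce a copy of $c_0$ inside $\mathcal{P}_w(^nE;F)$, then conclude via Proposition \ref{cor 36} for $n>1$ and via Emmanuele--John for $n=1$) matches the paper's, and the closing appeal to Bessaga--Pelczynski is the right move. But there is a genuine gap, and you locate it yourself: the weakly unconditionally Cauchy (upper) estimate for the blocks $P_j(x)=\varphi_0(x)^{n-r}\,\check Q\bigl((A_{I_j}Tx)^r,y_0^{n-r}\bigr)$ with $r\ge 2$ is never proved, only ``anticipated''. Worse, the mechanism you propose for it --- choosing the blocks $I_j$ so lacunary that the off-diagonal terms $\check Q\bigl(A_{I_{j_1}}Tx,\dots,A_{I_{j_r}}Tx,y_0^{n-r}\bigr)$ become ``summably small, uniformly on the unit ball'' --- cannot work as stated. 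These cross terms are all evaluated at one and the same $x$, and a single $x$ in the unit ball can load mass on every block simultaneously (take $G=\ell_2$ and $x$ spread evenly over the blocks, with $\check Q$ pairing one block against another); no degree of separation between the $I_j$ makes $\sup_{\|x\|\le 1}\|\check Q(A_{I_{j_1}}Tx,\dots)\|$ small, and since the number of cross terms grows like $|J|^r$, termwise smallness would not suffice anyway. Bounding the diagonal part of a multilinear form independently of its off-diagonal part is precisely the unconditionality-type obstruction that separates the polynomial case from John's linear case; it needs a real lemma, not a gliding hump.

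The paper closes exactly this gap by a different decomposition: by \cite[Lemma 6]{GONZALEZ M}, since $G$ has an unconditional finite dimensional expansion of the identity, the polynomial $Q$ itself can be written as $Q(z)=\sum_i Q_i(z)$ unconditionally for each $z\in G$, with every $Q_i\in\mathcal{P}_w(^nG;F)$; this lemma already absorbs all multilinear cross terms into the $Q_i$. Then $P(x)=\sum_i Q_i(Tx)$ pointwise unconditionally, each $Q_i\circ T\in\mathcal{P}_w(^nE;F)$, the uniform boundedness principle bounds the finite partial sums in norm, so $\sum_i Q_i\circ T$ is weakly unconditionally Cauchy in $\mathcal{P}_w(^nE;F)$; and it cannot be unconditionally convergent there, since its norm limit would be an element of the closed subspace $\mathcal{P}_w(^nE;F)$ agreeing pointwise with $P\notin\mathcal{P}_w(^nE;F)$. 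Diestel's theorem then yields the copy of $c_0$. Note that this route needs no polarization, no lower estimate $\|P_j\|\ge\delta''$, no subsequence extraction, and no choice of blocks at all. If you want to salvage your direct construction, the statement you would end up having to prove is essentially \cite[Lemma 6]{GONZALEZ M}; the efficient fix is to quote it.
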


\begin{proof}
The case $n=1$ follows from \cite[Theorem 2 ]{KA}.

Case $n> 1$: Since $G$ has an unconditional finite dimensional expansion of the identity, by \cite[Lemma 6 ]{GONZALEZ M} there is a sequence $(Q_{i})\subset \mathcal{P}_{w} (^{n}G; F)$ so that $Q(z)=\displaystyle\sum_{i=1}^{\infty}Q_{i}(z)$ unconditionally for each $z\in G$, hence $P(x)=\displaystyle\sum_{i=1}^{\infty}Q_{i}(T(x))$ unconditionally for each $x\in E$. Since $Q_{i}\in \mathcal{P}_{w} (^{n}G; F)$ for every $i\in \mathbb{N}$, it follows that $Q_{i}\circ T\in \mathcal{P}_{w} (^{n}E; F)$ for every $i\in \mathbb{N}$. By the uniform boundedness principle, we have $$\sup\bigg\{\bigg\|\sum_{i\in F}Q_{i}\circ T\bigg\| ; F\subset\mathbb{N}, F finite\bigg\}<\infty.$$
Again by \cite[p.44, Theorem 6]{DIESTELL} the series $\displaystyle\sum_{i=1}^{\infty}Q_{i}\circ T$ é weakly unconditionally Cauchy in $\mathcal{P}_{w} (^{n}E; F)$.
Since $P\notin \mathcal{P}_{w} (^{n}E; F)$, an application of \cite[p.45, Theorem 8]{DIESTELL} shows that $\mathcal{P}_{w} (^{n}E; F)$ contains a copy of $c_{0}$, and therefore by Proposition \ref{cor 36} $\mathcal{P}_{w} (^{n}E;F)$ is not complemented in $\mathcal{P}(^{n}E;F)$.
\end{proof}

\begin{corollary}\label{cor 300}
 Let $E$ and $F$ be Banach spaces, with $E$ infinite dimensional, and let $n>1$. If each $P\in \mathcal{P}(^{n}E; F)$ such that $P\notin \mathcal{P}_{w}(^{n}E; F)$ admits a factorization $P=Q\circ T$, where $T\in \mathcal{L}(E;G)$, $Q\in \mathcal{P}(^{n}G;F)$ and $G$ is a Banach space with an unconditional finite dimensional expansion of the identity, then the following conditions are equivalent:
 \begin{enumerate}

\item [(1)] $\mathcal{P}_{w} (^{n}E; F)$ contains a copy of $c_{0}$,
\item [($1^{\prime}$)] $\mathcal{P}_{K} (^{n}E; F)$ contains a copy of $c_{0}$,
\item [(2)] $\mathcal{P}_{w} (^{n}E; F)$ is not complemented in $\mathcal{P} (^{n}E; F)$,
\item [($2^{\prime}$)] $\mathcal{P}_{K} (^{n}E; F)$ is not complemented in $\mathcal{P} (^{n}E; F)$,
\item [(3)] $\mathcal{P}_{w} (^{n}E; F)\neq \mathcal{P} (^{n}E; F)$,
\item [($3^{\prime}$)] $\mathcal{P}_{K} (^{n}E; F)\neq \mathcal{P} (^{n}E; F)$,
\item [(4)] $\mathcal{P}(^{n}E; F)$ contains a copy of $c_{0}$,
\item [(5)] $\mathcal{P}(^{n}E; F)$ contains a copy of $\ell_{\infty}$.
\end{enumerate}
 \end{corollary}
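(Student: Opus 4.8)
The plan is to prove the eight conditions equivalent by running a single cycle of implications that rests on just three analytic inputs already established — Theorem \ref{thm:(Teorema 17)}, Proposition \ref{cor 36} and Proposition \ref{cor 366} — together with the Diestel--Faires theorem; every remaining arrow is a formal consequence of the inclusions $\mathcal{P}_{w}(^{n}E;F)\subseteq\mathcal{P}_{K}(^{n}E;F)\subseteq\mathcal{P}(^{n}E;F)$ and of the fact that $\mathcal{P}_{w}(^{n}E;F)$ is a norm-closed subspace. (Since Proposition \ref{cor 366} is invoked for the compact conditions, $F$ is taken infinite dimensional; if $F$ were finite dimensional then $\mathcal{P}_{K}=\mathcal{P}$ and the compact statements would read off directly.) First I would record the bookkeeping implications: the inclusion $\mathcal{P}_{w}\subseteq\mathcal{P}_{K}$ gives $(1)\Rightarrow(1')$ and $(3')\Rightarrow(3)$; the inclusion $\mathcal{P}_{K}\subseteq\mathcal{P}$ gives $(1')\Rightarrow(4)$; a space is trivially complemented in itself, so $(2)\Rightarrow(3)$ and $(2')\Rightarrow(3')$; and $c_{0}\hookrightarrow\ell_{\infty}$ gives $(5)\Rightarrow(4)$.

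Next I would close the inner group $\{(1),(2),(3),(1'),(2'),(3')\}$. The decisive arrow is $(3)\Rightarrow(1)$: if $\mathcal{P}_{w}(^{n}E;F)\neq\mathcal{P}(^{n}E;F)$, choose $P\notin\mathcal{P}_{w}$; by hypothesis $P=Q\circ T$ factors through a Banach space with an unconditional finite dimensional expansion of the identity, so Theorem \ref{thm:(Teorema 17)} produces a copy of $c_{0}$ in $\mathcal{P}_{w}$. Proposition \ref{cor 36} then gives $(1)\Rightarrow(2)$, and $(2)\Rightarrow(3)$ is formal, so $(1)\Leftrightarrow(2)\Leftrightarrow(3)$. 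Composing $(1)\Rightarrow(1')$ with Proposition \ref{cor 366} gives $(1')\Rightarrow(2')$, while $(2')\Rightarrow(3')\Rightarrow(3)$ is formal; hence $(3)\Rightarrow(1)\Rightarrow(1')\Rightarrow(2')\Rightarrow(3')\Rightarrow(3)$ and the six conditions are equivalent.

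It remains to attach $(4)$ and $(5)$. For $(4)\Rightarrow(3)$ I argue by contradiction: if $\mathcal{P}(^{n}E;F)$ contains $c_{0}$ but $\mathcal{P}_{w}=\mathcal{P}$, then $\mathcal{P}_{w}$ contains $c_{0}$, so Proposition \ref{cor 36} makes $\mathcal{P}_{w}$ non-complemented in $\mathcal{P}$, contradicting $\mathcal{P}_{w}=\mathcal{P}$. The one genuinely analytic new step is $(3)\Rightarrow(5)$. Take $P\notin\mathcal{P}_{w}$ and, as in the proof of Theorem \ref{thm:(Teorema 17)}, write $P(x)=\sum_{i}Q_{i}(T(x))$ unconditionally for each $x$, with $Q_{i}\circ T\in\mathcal{P}_{w}$ and $M:=\sup\{\|\sum_{i\in S}Q_{i}\circ T\|:S\subset\mathbb{N}\ \text{finite}\}<\infty$. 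Define $\mu:\wp(\mathbb{N})\to\mathcal{P}(^{n}E;F)$ by $\mu(A)(x)=\sum_{i\in A}Q_{i}(T(x))$; the pointwise sums converge in $F$, each $\mu(A)$ is a bounded $n$-homogeneous polynomial with $\|\mu(A)\|\le M$, and $\mu$ is finitely additive. It is \emph{not} strongly additive, for summing over the singletons $\{i\}$ would force $\sum_{i}Q_{i}\circ T$ to converge in norm to a limit lying in the closed subspace $\mathcal{P}_{w}$ and equal to $P$, contradicting $P\notin\mathcal{P}_{w}$. The Diestel--Faires theorem then embeds $\ell_{\infty}$ into $\mathcal{P}(^{n}E;F)$, which is $(5)$; with $(5)\Rightarrow(4)\Rightarrow(3)$ already in hand, all eight conditions are equivalent.

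The main obstacle is precisely the step $(3)\Rightarrow(5)$, that is, upgrading the copy of $c_{0}$ delivered by Theorem \ref{thm:(Teorema 17)} to a copy of $\ell_{\infty}$ in the ambient space $\mathcal{P}(^{n}E;F)$. The subtle point is that the subseries defining $\mu(A)$ must converge pointwise in $F$ for \emph{every} $A\subseteq\mathbb{N}$, even when $F$ itself contains $c_{0}$; this is exactly what the unconditional (rather than merely weakly unconditional) convergence of $P(x)=\sum_{i}Q_{i}(T(x))$ coming from the finite dimensional expansion of the identity guarantees, and it is what allows the Diestel--Faires machinery to run without the reduction on $F$ used in Proposition \ref{cor 36}.
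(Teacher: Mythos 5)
Your argument is correct and, for six of the eight conditions, follows the paper exactly: the cycle $(3)\Rightarrow(1)\Rightarrow(2)\Rightarrow(3)$ via Theorem \ref{thm:(Teorema 17)} and Proposition \ref{cor 36}, the primed chain $(1)\Rightarrow(1^{\prime})\Rightarrow(2^{\prime})\Rightarrow(3^{\prime})\Rightarrow(3)$ via Proposition \ref{cor 366}, and the contradiction argument for $(4)\Rightarrow(3)$ are precisely the paper's steps. The genuine divergence is the $\ell_{\infty}$ condition. The paper proves $(4)\Rightarrow(5)$ by transfer: Ryan's isometry $\mathcal{P}(^{n}E;F)\cong\mathcal{L}(\hat{\otimes}_{n,s,\pi}E;F)$ \cite{RYAN} turns the question into one about operator spaces, where \cite[Remark 3 e)]{KA} supplies the implication ``contains $c_{0}$ implies contains $\ell_{\infty}$.'' You instead prove $(3)\Rightarrow(5)$ directly: starting from $P\notin\mathcal{P}_{w}(^{n}E;F)$, you reuse the factorization machinery of Theorem \ref{thm:(Teorema 17)} to write $P(x)=\sum_{i}Q_{i}(T(x))$ unconditionally with $Q_{i}\circ T\in\mathcal{P}_{w}(^{n}E;F)$ and uniformly bounded finite partial sums, define the finitely additive measure $\mu(A)(x)=\sum_{i\in A}Q_{i}(T(x))$ on $\wp(\mathbb{N})$, note that it cannot be strongly additive (otherwise $P$ would be the norm limit of partial sums lying in the closed subspace $\mathcal{P}_{w}(^{n}E;F)$), and invoke Diestel--Faires. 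This is sound: unconditional pointwise convergence makes every subseries converge, and a pointwise limit of a norm-bounded sequence of continuous $n$-homogeneous polynomials is again one (polarization plus pointwise limits of multilinear maps) --- the same tacit step the paper itself takes in the proof of Proposition \ref{cor 36}. What your route buys is self-containedness: it needs only tools already proved or quoted in the paper, avoiding both Ryan's isometric identification and the external citation of John's remark. What it costs is length; the paper's version of this implication is a two-line citation.

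One caveat concerns your parenthetical on the dimension of $F$. You are right that Proposition \ref{cor 366} requires $F$ infinite dimensional, a hypothesis missing from the corollary's statement, and you were right to flag it; but the claim that for finite dimensional $F$ ``the compact statements read off directly'' is not accurate. If $F$ is finite dimensional then $\mathcal{P}_{K}(^{n}E;F)=\mathcal{P}(^{n}E;F)$, so $(2^{\prime})$ and $(3^{\prime})$ are automatically false, while $(1)$--$(5)$ can all hold: take $E=\ell_{2}$, $F=\mathbb{K}$, $n=2$, where the factorization hypothesis is trivially satisfied through $G=\ell_{2}$ and $P(x)=\sum_{i}x_{i}^{2}$ witnesses $(3)$. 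So the stated equivalence actually fails in that case. This is a defect of the corollary as stated rather than of your proof --- the paper's own proof silently assumes $F$ infinite dimensional at the step $(1^{\prime})\Rightarrow(2^{\prime})$ --- but the correct repair is to add the hypothesis that $F$ be infinite dimensional, not to wave the finite dimensional case through.
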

 \begin{proof}

 $(1)\Rightarrow (2)$ by Proposition \ref{cor 36}.


$(2)\Rightarrow (3)$ is obvious.

$(3)\Rightarrow (1)$ by Theorem \ref{thm:(Teorema 17)}.

$(1)\Rightarrow (4)$ is obvious.

$(4)\Rightarrow (3)$ suppose $(4)$ holds and $(3)$ does not hold. Then $\mathcal{P}_{w} (^{n}E; F)=\mathcal{P}(^{n}E; F)\supset c_{0}$. Thus $(1)$ holds,
and therefore $(3)$ holds, a contradiction.

$(5)\Rightarrow (4)$ is obvious.

$(4)\Rightarrow (5)$ by a result of Ryan \cite{RYAN} $\mathcal{P}(^{n}E; F)$ is isometrically isomorphic to $\mathcal{L}(\widehat{\otimes}_{n,s,\pi}E; F)$.
Thus the result follows from (\cite[Remark 3 e) ]{KA} part $2\Rightarrow 3$).

Thus $(1)$, $(2)$, $(3)$, $(4)$ and $(5)$ are equivalent.

$(1)\Rightarrow(1^{\prime})$ is obvious.

$(1^{\prime})\Rightarrow (2^{\prime})$ by Proposition \ref{cor 366}.

$(2^{\prime})\Rightarrow (3^{\prime})$ is obvious.

$(3^{\prime})\Rightarrow (3)$ is obvious.

Since $(3)\Rightarrow (1)$ and $(1)\Rightarrow (1^{\prime})$, the proof of the corollary is complete.
\end{proof}

In particular if $E$ has an unconditional finite dimensional expansion of the identity we obtain \cite[Theorem 7]{GONZALEZ M}.
The assumptions of this corollary apply also if $F$ is a complemented subspace of a space with an unconditional basis.

\end{document}